\documentclass[3p]{elsarticle}
\usepackage[left]{lineno} 
\usepackage[normalem]{ulem}
\usepackage[hidelinks]{hyperref}
\usepackage{subfigure}
\journal{untitled}
\usepackage{amsmath,amssymb,xcolor}
\usepackage{amsthm}
\newtheorem{theorem}{Theorem}
\newdefinition{rmk}{Remark}
\usepackage{float}
\usepackage{tikz}
\usepackage{url}
\usepackage{booktabs}
\usepackage{epsfig,subfigure,graphicx,float,adjustbox,lipsum}
\floatstyle{plain}\restylefloat{figure}
\floatstyle{plain}\restylefloat{table}
\theoremstyle{definition}

\let\oldproofname=\proofname
\renewcommand{\proofname}{\rm\bf{\oldproofname}}
\usepackage{aliascnt}
\usepackage[nameinlink]{cleveref}
\newaliascnt{lemma}{theorem}

\aliascntresetthe{lemma}
\crefname{lemma}{lemma}{lemmas}

\allowdisplaybreaks

\biboptions{sort&compress}
\begin{document}
	\begin{frontmatter}
		
		\title{\textbf{A Discrete--Time Model of the Academic Pipeline in Mathematical Sciences with Constrained Hiring in the United States
}}
		
		\author[author1]{Oluwatosin Babasola}	
         \corref{mycorrespondingauthor}
		\cortext[mycorrespondingauthor]{Corresponding author}
		\author[author2]{Olayemi Adeyemi }	
        \author[author3]{Ron Buckmire}	
		\author[author4]{Daozhou Gao}	
        \author[author5]{Maila Hallare}
  \author[author6]{Olaniyi Iyiola}
  \author[author7]{Deanna Needell}
  \author[author8,author9,author10]{Chad M. Topaz}
        \author[author11]{Andr\'es R. Vindas-Mel\'endez}

\address[author1]{ University of Georgia, Athens, GA, USA}
\address[author2]{Boise State University, Boise, ID, USA}
\address[author3]  {Marist University,  Poughkeepsie, NY ,  USA}
\address[author4]  {Cleveland State University,  Cleveland , OH ,  USA}
\address[author5]  {United States Air Force Academy, CO ,  USA}
\address[author6]  {Morgan State University,  Baltimore, MD ,  USA}
\address[author7]  {University of California,  Los Angeles, CA ,  USA}
\address[author8]{Williams College, Williamstown, MA, USA}
\address[author9]  {QSIDE Institute, Williamstown , MA,  USA}
\address[author10]  {University of Colorado,  Boulder, CO ,  USA}
\address[author11]  {Harvey Mudd College,  Claremont, CA ,  USA}

\begin{abstract}
The field of the mathematical sciences relies on a continuous academic pipeline in which individuals progress from undergraduate study through graduate training and postdoctoral program to long term faculty employment. National statistics report trends in bachelor's, master's, and doctoral degree awards, but these data alone do not explain how individuals move through the academic system or how structural constraints shape downstream career outcomes. Persistent growth in postdoctoral appointments alongside relatively stable faculty employment indicates that degree production alone is insufficient to characterize workforce dynamics. In this study, we develop a discrete time compartmental model of the academic pipeline in the field of the mathematical sciences that links observed degree flows to latent population stocks. Undergraduate and graduate populations are reconstructed directly from nationally reported degree data, allowing postdoctoral and faculty dynamics to be examined under completion, exit, and hiring processes. Advancement to faculty positions is modeled as vacancy limited, with competition for permanent positions depending on downstream population size. Numerical simulations show that increases in degree inflow do not translate into proportional faculty growth when hiring is constrained by limited turnover. Instead, excess supply accumulates primarily at the postdoctoral stage, leading to sustained congestion and elevated competition. Sensitivity analyses indicate that long run workforce outcomes are governed mainly by faculty exit rates and hiring capacity rather than by degree production alone. These results demonstrate the central role of vacancy limited hiring in shaping academic career trajectories within the field of the mathematical sciences.

\end{abstract}
 \begin{keyword}
 		 Academic pipeline \sep discrete--time model \sep compartmental systems \sep stock–flow dynamics \sep capacity-constrained transitions.
 		\end{keyword}
	\end{frontmatter}

\section{Introduction}
 \noindent The mathematical sciences play a central role in advances in science, engineering, technology, and quantitative decision--making, while also serving as a foundational discipline for academic research and education. Universities support this role through a structured sequence of academic stages consisting of undergraduate education, graduate training, postdoctoral research, and long-term faculty employment. The long-term viability of this system depends on sustained progression through these stages and on alignment between degree production and the capacity of the academic workforce. So, understanding how this progression unfolds over time is critical for assessing the sustainability of the mathematical sciences community. Over years, national datasets have shown substantial variation in the number of bachelor’s, master’s, and doctoral degrees awarded in mathematics and closely related fields, which reflects periods of expansion, stagnation, and renewed growth \cite{NCESdegMath,falkenheim2023academic}. At the same time, persistent concerns remain regarding student attrition, prolonged tenure in temporary academic positions, and limited availability of permanent faculty roles \cite{powell2015future}. These concerns highlight the challenge of inferring downstream workforce dynamics from degree counts alone. Available datasets through the National Center for Education Statistics (NCES) shows that the number of degrees awarded each year, but they do not reveal how individuals progress through the academic system, where attrition occurs, or how structural constraints shape long-term career trajectories. A central problem with this data is that it represent flows rather than populations. Awarded degrees measure annual completions, whereas the academic system consists of evolving stocks of enrolled students, postdoctoral researchers, and faculty members. Consequently, changes in these underlying populations depend not only on degree production but also on completion behavior, exit from academia, and competition for permanent positions.\\
 
 \noindent Therefore, without a framework linking the observed degree flows to these latent populations, it is difficult to assess retention across stages, identify bottlenecks, or determine whether observed patterns reflect the structural features of the academic system. Empirical studies show that attrition occurs throughout the academic pathway \cite{LarsonGhaffarzadeganXue2014,FitzGeraldEtAl2023}. Moreover, undergraduate retention in science and mathematics is consistently lower than institutional averages, with a substantial share of students leaving their initial field of study before completing a degree \cite{correll1997talking,seymour1997talking,chen2013stem,NCESgradRates}. These early losses reduce the pool of students eligible for advanced training and contribute to variability in downstream degree production.\\
 
 \noindent At the graduate level, persistence remains a challenge. Existing studies on doctoral education highlights significant first-year attrition and prolonged time to degree in quantitative fields \cite{golde1998beginning,NSFtimeToDegree}. According to \cite{cleary20102009}, only a subset of doctoral recipients pursue academic careers, and even fewer are able to obtain permanent faculty positions . Following doctoral degree completion, many individuals enter postdoctoral appointments, which function as transitional positions between training and permanent employment. Surveys conducted by the American Mathematical Society (AMS) detail the prevalence of temporary research appointments in the mathematical sciences and show that only a fraction of postdoctoral researchers ultimately secure faculty positions \cite{AMSworkforceReports}. Similar patterns have been observed across the sciences, where postdoctoral employment is characterized by fixed-term contracts, high competition, and uncertain career outcomes \cite{XueLarson2015}.\\
 
 \noindent Studies of scientific labor markets further indicate that prolonged periods in temporary positions are shaped not only by individual productivity but also by structural constraints on hiring and institutional capacity \cite{stephan2015economics}. These structural constraints become most pronounced at the faculty level, which represents the most limited stage of the academic pipeline. Workforce surveys indicate that faculty positions are few in number and characterized by long career durations, with retirements occurring gradually over extended periods \cite{AMSworkforceReports}. This scarcity, combined with prolonged postdoctoral and temporary appointments, suggests that not all doctoral recipients seeking academic careers can secure permanent positions. Moreso, economic analyses of higher education further highlight that hiring decisions are determined by institutional budgets, tenure systems, and long-term employment commitments rather than short-term fluctuations in degree production \cite{ehrenberg2012american}. \\
 
 \noindent A range of quantitative approaches has been applied to education and workforce systems to understand the career progression, and population-level analyses of academic labor markets \cite{larson2014too,ghaffarzadegan2016education}. Population-based models have been used to examine academic career pathways, particularly in biomedical research \cite{ghaffarzadegan2015note, b4} and several work has developed explicit dynamical representations of academic populations. A structured demographic model presented by \cite{MarschkeEtAl2007} shows that faculty composition evolves slowly over time, even when hiring rates or retention policies are substantially altered. Also, analyses based on stochastic update models calibrated to population level data further demonstrate that hiring and attrition influence faculty composition through distinct mechanisms \cite{LaBergeEtAl2024, b2}. These studies show that hypothetical interventions, such as equalizing retention risks or adjusting hiring shares, tend to produce limited short run changes because existing population structure strongly constrains system response \cite{LaBergeEtAl2024}. Furthermore, comparisons between models with uniform transition rules and those with stage specific processes were presented by \cite{ShawStanton2012,ShawEtAl2021} which reveal where losses are concentrated and how these losses differ across demographic groups. Work focused on the postdoctoral workforce further illustrates how modest differences in transition probabilities can accumulate as cohorts advance through the academic system, which leads to persistent disparities over time \cite{GhaffarzadeganHawleyDesai2014}. These results emphasis the importance of modeling transition flows explicitly rather than inferring long--run outcomes from endpoint comparisons alone. However, many existing studies focus on individual stages of the academic pipeline, rely on static snapshots rather than dynamic representations, and only a few explicitly distinguish between observed flows in degree awards and unobserved stocks such as enrolled students, postdoctoral researchers, and faculty populations. As a result, discipline - specific modeling frameworks for the mathematical sciences remain limited, despite the availability of long-term degree and placement data \cite{cleary20102009,topaz2026dynamicalmodelusmathematics}.\\ 
 
 \noindent These limitations motivate the development of a modeling framework that explicitly links observed degree production to underlying academic populations through a mechanistic structure. In this study, the academic pipeline in the mathematical sciences is represented as a discrete--time compartmental system in which individuals progress through undergraduate, graduate, postdoctoral, and faculty stages through the completion, transition, and exit processes. This approach draws on methods commonly used in population biology and epidemiology, where stocks represent populations present at a given time and flows represent transitions between states \cite{b1,brauer2012mathematical,iyi5}. This work is closely related to \cite{topaz2026dynamicalmodelusmathematics}, which develops a discrete--time dynamical model for the U.S. mathematics graduate degree pipeline with time - varying recruitment and completion parameters. The present work complement the study by extending the framework to downstream workforce stages by explicitly representing postdoctoral and faculty populations under vacancy--limited hiring, and state--dependent competition for permanent positions. In particular, \cite{topaz2026dynamicalmodelusmathematics} focuses on explaining observed degree trends, while the present work examines how observed degree flows propagate through a structurally constrained academic workforce. A central feature of the present framework is the explicit separation between observed flows and latent stocks. By introducing observation relationships that link degree awards to underlying populations, undergraduate and graduate stocks are reconstructed directly from empirical data. This formulation allows the dynamic analysis to focus on downstream stages, where structural constraints such as limited hiring capacity and slow faculty turnover are most pronounced.\\ 
 
 \noindent Within this framework, we examine how observed degree flows propagate through the academic pipeline over time, identify conditions under which accumulation arises at intermediate career stages, and explore how long--run workforce structure depends on completion behavior, exit rates, and faculty turnover. By integrating nationally reported degree data with a stock–flow representation of academic progression, this work provides a data--anchored, mechanistic approach for analyzing academic workforce dynamics in the mathematical sciences community. The remainder of this article is structured as follows. Section~\ref{sec2} presents the formulation of the discrete--time pipeline model, parameter definitions, and analytical properties governing system behavior. Section~\ref{sec3} present the results from numerical simulations which emphasize on how degree inflow scenarios and hiring mechanisms influence faculty growth, postdoctoral accumulation, and competition for faculty positions. Section~\ref{sec4} discusses the findings on academic workforce dynamics and structural constraints while Section~\ref{sec5} concludes with a summary of the main contributions and directions for future research.

\section{Method}\label{sec2}

\subsection{Model formulation and stock--flow structure}
 \noindent We represent the academic pathway in the mathematical sciences as a stock--flow system evolving on an annual time scale. The modeling principle adopted is the explicit separation between observable degree outcomes and latent academic populations. The model therefore treats degree awards as flows generated by underlying population stocks rather than as direct measures of workforce size. So, we organize the academic pipeline into four stages: undergraduate students $U_t$, graduate students $G_t$, postdoctoral researchers $P_t$, and faculty members $F_t$. Each stage is regarded as stock, which represents the number of individuals actively present in the corresponding stage during year $t$, while changes in stock size arise from progression from upstream stages and from exits from academia. The conceptual structure of the pipeline is shown in Figure~\ref{fig:pipeline_flow}.

\vspace{10mm}
\begin{figure}[H]
\centering
\resizebox{13cm}{!}{
\begin{tikzpicture}[>=stealth, thick]

\tikzstyle{enter}=[rectangle, rounded corners, draw=black, fill=green!10,
                   minimum width=20mm, minimum height=10mm, align=center]

\tikzstyle{stage}=[rectangle, rounded corners, draw=black, fill=blue!10,
                   minimum width=20mm, minimum height=12mm, align=center]
\tikzstyle{exitbox}=[rectangle, draw=black, fill=red!10,
                  minimum width=20mm, minimum height=10mm, align=center]
\tikzstyle{proc}=[rectangle, rounded corners, draw=black, fill=gray!10,
                  minimum width=20mm, minimum height=12mm, align=center]

\node[enter] (B) at (-4,0) {Inflow\\$B(t)$};
\node[stage] (U) at (0,0) {Undergraduate\\$U_t$};
\node[stage] (G) at (4.5,0) {Graduate\\$G_t$};
\node[stage] (F) at (10.5,0) {Faculty\\$F_t$};

\node[stage] (P) at (4.5,-4.2) {Postdoc\\$P_t$};
\node[proc]  (H) at (8.5,-2.1) {Hiring\\$H_t$};

\node[exitbox] (exitU) at (0,-2.7) {Exit};
\node[exitbox] (exitG) at (4.5,2.7) {Exit};
\node[exitbox] (exitP) at (4.5,-6.9) {Exit};
\node[exitbox] (exitF) at (10.5,-3.7) {Exit};


 \draw[->] (B) -- (U);

\draw[->] (U) -- (G);

\draw[->] (U) -- (exitU);

\draw[->] (G) -- (exitG);

\draw[->] (G) -- (P);

\draw[->] (G) to[bend right=18] (H);

\draw[->] (P) to[bend left=18] (H);

\draw[->] (H) -- (F);


\draw[->] (P) -- (exitP);

\draw[->] (F) -- (exitF);

\end{tikzpicture}
\caption{Academic pipeline flow diagram showing the four-stage stock–flow structure with external inflow, completion-driven transitions, stage-specific exits, and state--dependent progression probabilities.}
\label{fig:pipeline_flow}}
\end{figure}
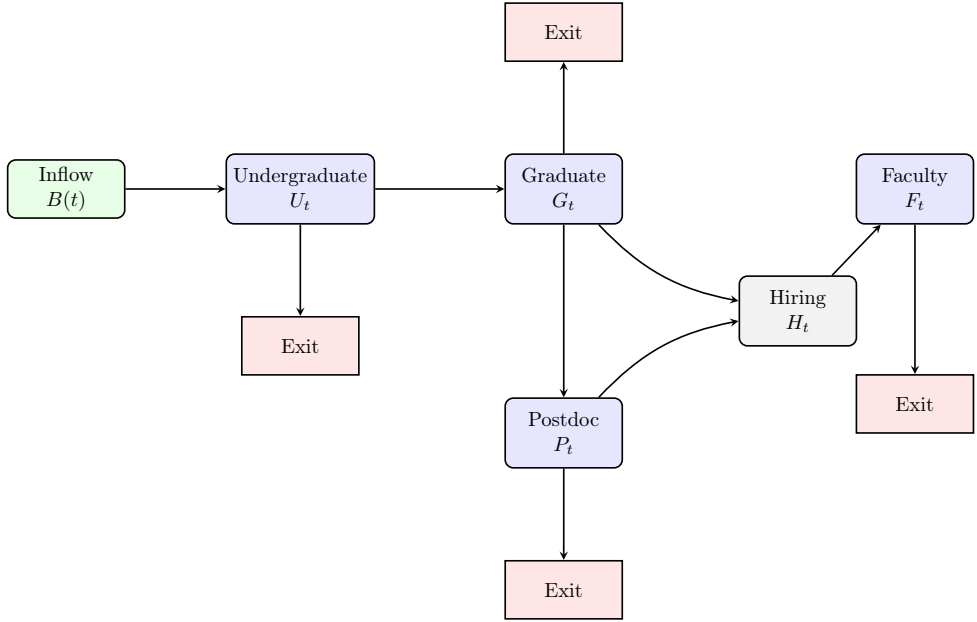
 \noindent As shown in Figure \ref{fig:pipeline_flow}, individuals enter the system through undergraduate study, progress through graduate training and postdoctoral appointments, and may ultimately obtain faculty positions. Exit from academia is permitted at every stage. The undergraduates enter the system through an external inflow and leave the undergraduate stage through degree completion or attrition. Graduate students complete degrees or exit prior to completion. Graduates may transition directly to faculty positions, enter postdoctoral appointments, or exit academia. Postdoctoral researchers occupy temporary academic positions and may either transition to faculty roles or exit the academic system. Faculty members leave the system through retirement or permanent departure, creating vacancies for new hires. The temporal evolution of the system is governed by the following recurrence relations:
\begin{equation}
\begin{aligned}
U_{t+1} &= (1 - g^U - a^U)U_t + B(t), \\
G_{t+1} &= (1 - g^G - a^G)G_t + p^{UG}(G_t) g^U U_t, \\
P_{t+1} &= \left(1 - p^{PF}(F_t) - a^P\right)P_t + p^{GP} g^G G_t, \\
F_{t+1} &= (1 - a^F)F_t + p^{GF} g^G G_t + p^{PF}(F_t)P_t. 
\end{aligned}\label{model}
\end{equation}
 The parameters $g^U$ and $g^G$ denote the annual probabilities of degree completion at the undergraduate and graduate stages, respectively, while $a^U$, $a^G$, $a^P$, and $a^F$ represent the annual probabilities of exiting the academic pipeline from each corresponding stage. The term $B(t)$ denotes the external inflow of new students entering the undergraduate population in year $t$. Progression between stages is governed by state--dependent transition probabilities. The function $p^{UG}(G_t)$ represents the probability that an undergraduate completer enters graduate study and depends on the size of the graduate population $G_t$, reflecting limits on graduate program capacity. Similarly, $p^{PF}(F_t)$ denotes the probability that a postdoctoral researcher transitions to a faculty position and depends on the current faculty population $F_t$, capturing competition for a fixed number of permanent academic positions. The system in \eqref{model} represents an unconstrained hiring regime in which all qualified candidates can transition into faculty positions according to the modeled transition probabilities.\\
 
 \noindent To study the effects of limited hiring capacity, we also consider a vacancy--limited regime in which faculty hires are capped by available openings generated through expansion or faculty exits. The vacancy--limited mechanism modifies the effective transitions to faculty while preserving the same candidate supply structure. Although the full model is presented as a four-stage academic pipeline, the undergraduate and graduate populations are reconstructed directly from observed degree data through  the observation equations. As a result, $U_t$ and $G_t$ are treated as data--anchored inputs rather than endogenously forecast state variables. The dynamic core of the model therefore governs the evolution of postdoctoral and faculty populations, which respond to the reconstructed upstream flows under vacancy--limited hiring and competition mechanisms.

\subsection{Vacancy limited hiring mechanism}
\noindent Faculty hiring is explicitly constrained by the availability of open positions. We have $a^F$ which denotes the annual faculty exit probability. The number of faculty vacancies $(V_t)$ created in year $t$ is
\begin{equation}
V_t = a^F F_t .
\end{equation}
Candidates for faculty positions arise from two sources. Graduate completers may transition directly into faculty roles, generating a candidate supply $(C_t^{dir})$, expressed as
\begin{equation}
C_t^{dir} = p^{GF}\,g^G\,G_t .
\end{equation}
Postdoctoral researchers form a second candidate pool, with supply $C_t^{post}$ which is defined as
\begin{equation}\label{eq4}
C_t^{post} = p^{PF}(F_t)\,P_t ,
\end{equation}
where $p^{PF}(F_t)$ is a state--dependent transition intensity. Total faculty hires in year $t$ are limited by the number of available vacancies and this is given as
\begin{equation}\label{eq5}
H_t = \min \left\{ V_t,\; C_t^{dir} + C_t^{post} \right\}.
\end{equation}
The quantity $H_t$ denotes total faculty hires and we have also have $H_t^{post}$ which denotes the number of postdoctoral researchers hired into faculty positions in year $t$. When both candidate pools are nonzero, hires are allocated proportionally to candidate supply. The proportional allocation assumes that hiring committees draw from candidate pools in proportion to their relative availability. This is a simplifying modeling assumption intended to separate candidate supply from realized hires. In practice, hiring preferences may favor particular candidate categories, such as postdoctoral researchers or recent graduates. Under this assumption, the number of postdoctoral hires is given by
\begin{equation}\label{eq6}
H_t^{post} = H_t \frac{C_t^{post}}{C_t^{dir} + C_t^{post}} .
\end{equation}
Competition for faculty positions is represented by a decreasing postdoctoral transition intensity,
\begin{equation}
p^{PF}(F_t) = \frac{p^{PF}_{\max}}{1 + \alpha^F F_t / K_F} ,
\end{equation}
where $K_F$ sets the scale of the faculty workforce and $\alpha^F$ controls the strength of competition. The functional form for the postdoctoral to faculty transition probability is not intended as an empirically estimated hazard. Rather, it is a phenomenological choice designed to enforce monotonicity, boundedness, and interpretable limiting behavior. As faculty numbers increase relative to capacity, the effective hiring probability decreases smoothly and remains within admissible bounds. The parameters $\alpha_F$ and $K_F$ act as scaling quantities that determine the strength and onset of competition, rather than calibrated estimates tied to observed hiring data. This formulation ensures bounded transitions and prevents unbounded faculty growth. Capacity constraints in graduate education are represented by a state--dependent undergraduate to graduate transition probability,
\begin{equation}
p^{UG}(G_t) = p^{UG}_{\max} \left( 1 - \frac{G_t}{K_G} \right)_+ ,
\end{equation}
where $K_G$ denotes the graduate capacity scale and $(\cdot)_+$ indicates truncation at zero.

\subsection{Vacancy--limited state updates}
\noindent The quantities defined above describe candidate supply and realized hiring under vacancy--limited conditions. In this regime, candidate flows do not translate directly into state transitions as in the unconstrained model. Instead, faculty hires are capped by available vacancies and allocated proportionally between candidate pools as described in  \eqref{eq5} -- \eqref{eq6}. As a result, the population dynamics for postdoctoral researchers and faculty differ from the unconstrained recurrences in \eqref{model}. Under vacancy--limited hiring, faculty and postdoctoral populations evolve according to
\begin{equation}
\begin{aligned}
F_{t+1}& = (1 - a^F)\,F_t + H_t,\\
P_{t+1}& = P_t - H_t^{\text{post}} - a^P P_t + p^{GP} g^G G_t,
\end{aligned}
\end{equation}
 where $H_t$ remains the total faculty hires and $H_t^{\text{post}}$ denotes the subset of hires drawn from the postdoctoral pool. We note that the undergraduate and graduate populations are reconstructed directly from observed degree data and are therefore treated as exogenous inputs in this regime.

\subsection{Data and model calibration}
\noindent This section describes the empirical data used to inform the model, the manner in which these data are linked to the latent model states, and the procedure used to assign baseline parameter values. The objective is to explicitly discuss how observed quantities constrain the model while clearly distinguishing between directly observed data, latent populations inferred and parameters specified through empirical evidence or structural assumptions. It is important to note that the focus is not to estimate or forecast academic workforce sizes from data, but rather to examine how observed degree flows propagate through a structurally constrained academic pipeline. The model parameters are externally calibrated from published sources while some are chosen to reflect existing career durations and transition patterns, rather than estimated by fitting to workforce time series.

\subsubsection{Observed degree data}
\noindent The primary empirical inputs consist of national annual degree counts in the mathematical sciences. Specifically, we use time series of bachelor’s, master’s, and doctoral degree awards compiled from publicly available national datasets ranging across $1970 - 2020$. These data provide consistent annual measurements of degree production but do not directly report enrollment or workforce populations defined under a common classification across time.\\
Suppose $B_s(t)$ denote the number of bachelor’s degrees awarded in year $t$, and let $M_s(t)$ and $D_s(t)$ denote the numbers of master’s and doctoral degrees awarded, respectively. These series are treated as reliable observations of annual completion flows at the undergraduate and graduate stages. In contrast, consistent national time series for undergraduate enrollment, graduate enrollment, postdoctoral employment, and faculty employment are not available across the full historical period. These quantities are therefore treated as latent states within the modeling framework.

\subsubsection{Linking observed degrees to latent stocks}
 \noindent The model distinguishes explicitly between latent population stocks and observable annual outcomes. The state variables $U_t$, $G_t$, $P_t$, and $F_t$ represent the numbers of individuals actively present in undergraduate study, graduate study, postdoctoral appointments, and faculty positions during year $t$. Observed degree counts are interpreted as flows generated by completion processes acting on the corresponding enrollment stocks. Observed degree production is linked to the model through the observation equations,
\begin{equation}
B_s(t) = g^U U_t,
\qquad
M_s(t) = r_{M}\, g^G G_t,
\qquad
D_s(t) = r_D\, g^G G_t,
\label{eq:obslink}
\end{equation}
where $g^U$ and $g^G$ remains the annual completion probabilities at the undergraduate and graduate stages, and $r_{M}$ and $r_D$ denote the proportions of graduate completions receiving master’s and doctoral degrees, respectively, with $r_{M}+r_D=1$. The degree composition parameters are estimated directly from the observed data as
\begin{equation}
r_{M} = \frac{\sum_t M_s(t)}{\sum_t \bigl(M_s(t)+D_s(t)\bigr)},
\qquad
r_D = 1-r_{M}.
\label{eq:rM}
\end{equation}
\noindent Using the available national degree series, we obtain $r_{M}\approx0.80$ and $r_D\approx0.20$, consistent with long--run national patterns in graduate degree production. Because enrollment stocks defined consistently with the degree series are not directly observed, undergraduate and graduate populations are reconstructed from the observation equations, such that
\begin{equation}
U_t = \frac{B_s(t)}{g^U},
\qquad
G_t = \frac{M_s(t)+D_s(t)}{g^G}.
\label{eq:reconstructUG}
\end{equation}
\noindent This reconstruction anchors the upstream stages of the model directly to empirical degree production and avoids attempting to estimate highly correlated combinations of enrollment levels and completion probabilities from degree flows alone. In this formulation, the observed degree series act as exogenous inputs that drive downstream workforce dynamics. Postdoctoral and faculty populations are not reconstructed directly from data. Instead, they are generated endogenously by the model through graduate placement, vacancy--limited hiring, and exit processes. 
\subsubsection{Parameter calibration}
 \noindent Model parameters are assigned using a combination of empirical anchoring, calibration to aggregate statistics, and structural specification. This approach reflects the heterogeneous nature of the available data. Completion probabilities at the undergraduate and graduate stages, $g^U$ and $g^G$, are anchored to published statistics on degree completion and typical time to degree. These parameters are treated as externally informed quantities rather than estimated from the degree time series, since the reconstruction in \eqref{eq:reconstructUG} already enforces consistency between completion probabilities and observed degree flows. For the exit probabilities, undergraduates and graduates exit probabilities, $a^U$ and $a^G$, are informed by reported retention and attrition statistics in higher education datasets. The postdoctoral exit probability $a^P$ reflects the finite duration of postdoctoral appointments and transitions into non--academic careers and is calibrated to reported postdoctoral tenure distributions. The faculty exit probability $a^F$ is informed by faculty age distributions and retirement patterns reported in surveys. Placement fractions for graduate completers are specified using aggregate placement statistics. The parameter $p^{GP}$ and $p^{GF}$ are constrained to satisfy $p^{GP}+p^{GF}\leq 1$, with the remaining fraction exiting academia. Because placement outcomes are typically reported as multi--year averages rather than annual time series, these parameters are treated as time invariant in the baseline model.\\
 
 \noindent Structural parameters governing capacity--constrained transitions are not directly observable and are therefore specified based on system scale and explored through sensitivity analysis. The graduate capacity parameter $K_G$ controls saturation in the undergraduate-to-graduate transition, while the faculty capacity parameter $K_F$ controls suppression of postdoctoral hiring into faculty positions as the faculty workforce grows. The competition parameter $\alpha^F$ governs the strength of this suppression. Maximum transition intensities $p_{\max}^{UG}$ and $p_{\max}^{PF}$ define upper bounds on progression in the absence of capacity constraints and are chosen to be consistent with historically observed placement rates. All baseline parameter values, interpretations, and data sources are summarized in Table~\ref{tab:baseline-params}.
\vspace{5mm}
\begin{table}[H]
  \caption{Baseline model parameters, interpretations, and data sources.}
\label{tab:baseline-params}
		\resizebox{16cm}{!}{
			\begin{tabular}{clcl}
				\toprule
				Parameter & Description & Baseline value & Source \\
				\midrule

\hline
$B(t)$ 
& Annual inflow of new undergraduate entrants 
& Observed 
& Reconstructed from NCES degree data \cite{NCESdegMath} \\

$g^U$ 
& Undergraduate degree completion probability 
& $0.14$ 
& Estimated from six-year STEM completion rates \cite{NCESgradRates,PCASTSTEM} \\

$a^U$ 
& Undergraduate exit probability without completion 
& $0.12$ 
& Estimated from STEM attrition studies \cite{seymour1997talking,chen2013stem} \\

$p^{UG}$ 
& Fraction of undergraduate completers entering graduate study 
& State dependent 
& Assumed with capacity limitation, informed by AMS and NSF trends \\

$K_G$ 
& Graduate program capacity scale 
& $25{,}000$ 
& Assumed scale based on national graduate enrollment magnitudes \\

$g^G$ 
& Graduate degree completion probability 
& $0.17$ 
& Estimated from median time to degree statistics \cite{NSFtimeToDegree} \\

$a^G$ 
& Graduate exit probability without completion 
& $0.08$ 
& Estimated from doctoral attrition literature \cite{golde1998beginning} \\

$p^{GP}$ 
& Fraction of graduate completers entering postdoctoral positions 
& $0.45$ 
& Estimated from AMS placement outcomes \cite{cleary20102009} \\

$p^{GF}$ 
& Fraction of graduate completers entering faculty positions 
& $0.08$ 
& Estimated from AMS placement outcomes \cite{cleary20102009} \\

$a^P$ 
& Postdoctoral exit probability 
& $0.25$ 
& Estimated from typical postdoctoral duration \cite{powell2015future} \\

$p^{PF}_{\max}$ 
& Maximum postdoctoral transition intensity to faculty 
& $0.18$ 
& Estimated from longitudinal placement summaries \cite{AMSworkforceReports} \\

$a^F$ 
& Faculty exit probability 
& $0.03$ 
& Estimated from average faculty career length \cite{AMSworkforceReports} \\

$K_F$ 
& Faculty workforce capacity scale 
& $4{,}000$ 
& Assumed order of magnitude from national faculty counts \cite{AMSworkforceReports} \\

$\alpha^F$ 
& Strength of competition for faculty positions 
& $1.0$ 
& Assumed neutral competition strength \\

\bottomrule   
				 \end{tabular}
            }
		
    \end{table}
 \noindent We further note that the parameters governing competition and capacity are not estimated from data. They are introduced to capture structural constraints on hiring and to prevent unrealistically unbounded growth. Also, the choice $\alpha^F=1.0$ serves as a neutral baseline scaling; alternative values are explored through sensitivity analysis.

\subsubsection{Consistency with observed degree trends}
 \noindent As an internal consistency check, we compare the degree flows implied by the reconstructed enrollment stocks with the observed national degree series. Figure~\ref{count} displays the observed master’s, and doctoral degree counts alongside the corresponding model-generated outputs obtained from the observation equations. It show the consistency check between observed degree data and model-implied degree outputs under the reconstruction framework. By construction, the model reproduces the observed degree trends, confirming that the reconstruction and calibration procedures are internally consistent. These comparisons are not interpreted as measures of predictive performance but serve to verify the linkage between observed data and latent model states. The calibrated parameters and reconstructed initial conditions are used in all subsequent simulations.

\begin{figure}[H]
\centering

\subfigure[]{%
\label{Grad}
\includegraphics[width=0.5\textwidth, height = 5.5cm]{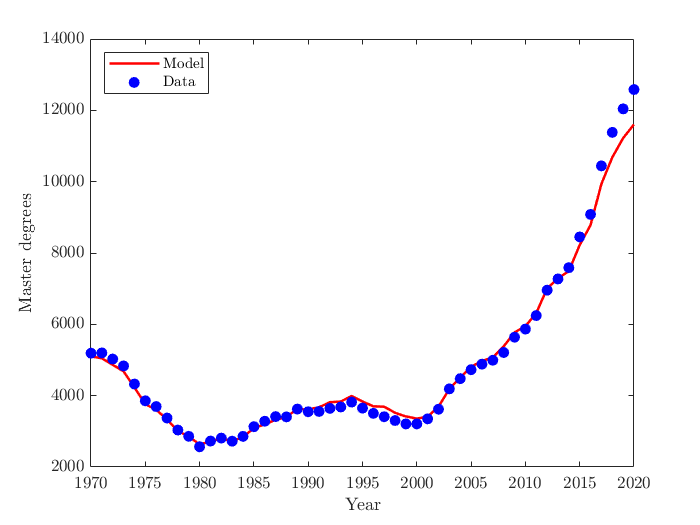}
}%
\subfigure[]{%
\label{Doct}
\includegraphics[width=0.5\textwidth, height =5.5cm]{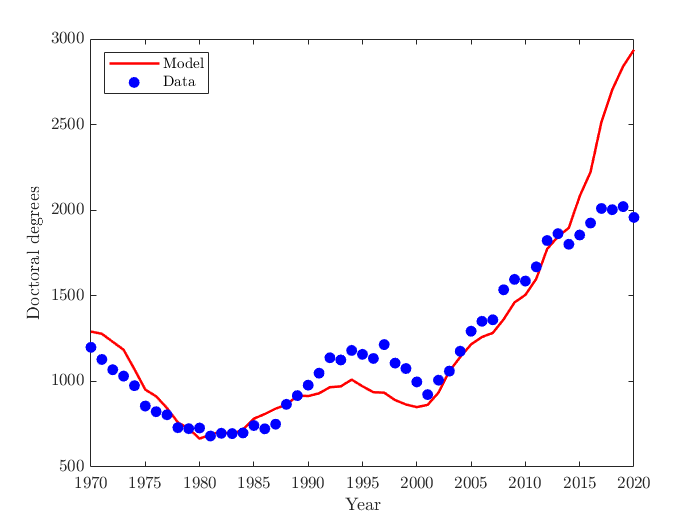}
}
\caption{Observed national degree counts (blue points ) and model-implied degree flows (red lines) for master’s and doctoral degrees. Model outputs are computed from reconstructed graduate and doctoral populations using the observation equations.}
\label{count}
\end{figure}
 \noindent The agreement between observed and model--generated degree series in Figure~\ref{count} is interpreted as an internal consistency check rather than an out of sample validation. Because undergraduate and graduate populations are reconstructed directly from observed degree data at each time step, this agreement is largely guaranteed by construction and does not constitute empirical validation of downstream workforce dynamics.

\subsection{Sensitivity analysis}
 \noindent To evaluate the robustness of the modeled workforce dynamics and to identify the structural mechanisms that most strongly regulate postdoctoral congestion and faculty outcomes, we conducted both local parameter perturbation analysis and global sensitivity analysis around the baseline parameterization. The analysis focuses on parameters governing progression, exit, hiring capacity, and competitive pressure, and examines their influence on both terminal and peak values of faculty and postdoctoral populations. The objective is to distinguish parameters that primarily redistribute individuals within the pipeline from those that fundamentally alter long run workforce structure. We first examined the response of model outcomes to independent perturbations of individual parameters while holding all others fixed at baseline values. This local analysis evaluates how marginal changes in a single structural mechanism propagate through the pipeline. Figures~\ref{fig:sensitivity_oat1} to \ref{fig:sensitivity_oat3} summarize the effects of these perturbations on final and maximum faculty and postdoctoral population levels while Figure \ref{fig:sensitivity_oat4} shows the global sensitivity.

\begin{figure}[H]
\centering
\subfigure[]{%
\label{fig:sensitivity_oat_alphaF}
\includegraphics[width=0.33\textwidth,height=5.6cm]{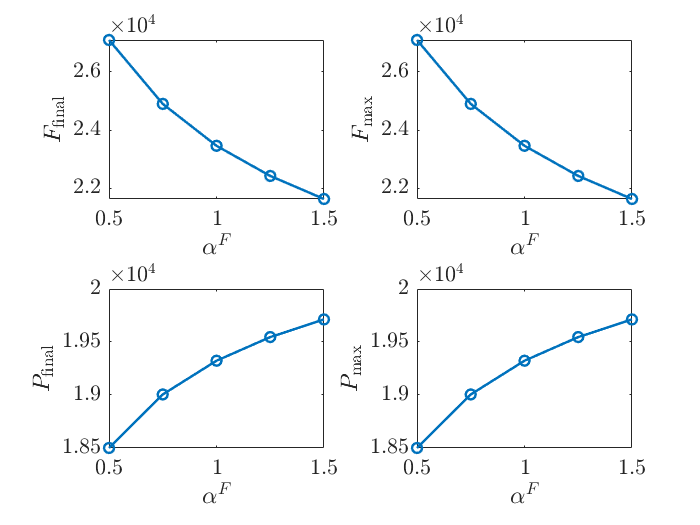}
}%
\subfigure[]{%
\label{fig:sensitivity_oat_aP}
\includegraphics[width=0.34\textwidth,height=5.6cm]{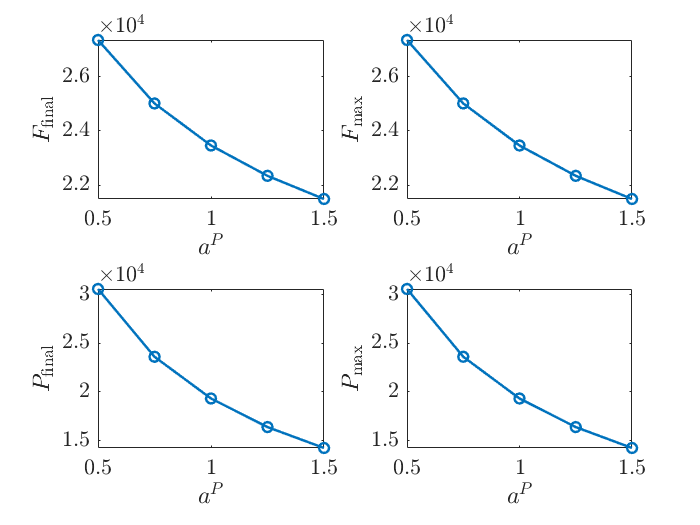}
}%
\subfigure[]{%
\label{fig:sensitivity_oat_aF}
\includegraphics[width=0.33\textwidth,height=5.6cm]{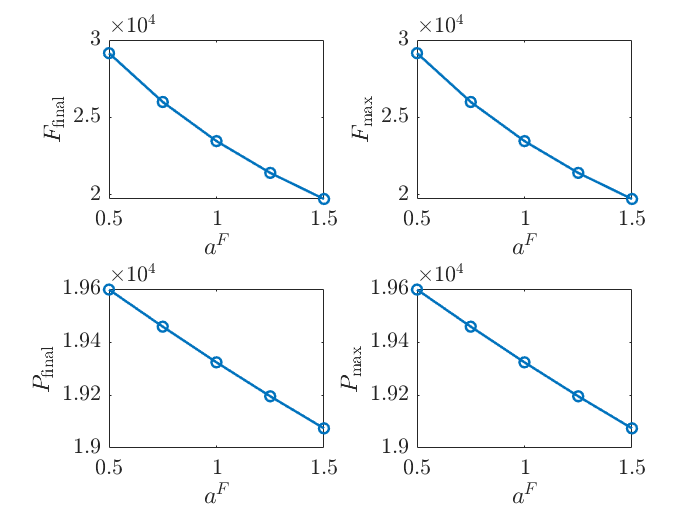}
}
\caption{Local parameter perturbation analysis showing sensitivity of final and peak faculty and postdoctoral populations to (a) variation in competition intensity, (b) postdoctoral exit, and (c) faculty exit parameters.}
\label{fig:sensitivity_oat1}
\end{figure}

\noindent  Figure~\ref{fig:sensitivity_oat_alphaF} shows sensitivity to the competition intensity parameter governing postdoc-to-faculty transitions. It is observed that increasing competitive pressure reduces both final and peak faculty populations while simultaneously increasing postdoctoral accumulation. This pattern indicates that congestion is shaped not only by the number of candidates entering the hiring pool but also by how competition suppresses the efficiency with which postdoctoral researchers convert into permanent positions. We illustrates the effect of postdoctoral exit rates in Figure~\ref{fig:sensitivity_oat_aP}. From this figure, we observed that higher postdoctoral exit reduces both terminal and peak postdoctoral populations, but also leads to lower faculty outcomes. This reflects the role of postdoctoral exits as a loss mechanism for potential faculty candidates. Figure~\ref{fig:sensitivity_oat_aF} examines sensitivity with respect to the faculty exit rate. This shows that increasing faculty exit lowers final and peak faculty size directly, but also reduces postdoctoral accumulation by increasing the availability of hiring opportunities. This result highlights faculty turnover as a central regulator of both workforce stability and congestion. Next, we show the influence of hiring capacity and faculty transition in Figure~\ref{fig:sensitivity_oat2}.

\begin{figure}[H]
\centering
\subfigure[]{%
\label{fig:sensitivity_oat_pPFmax}
\includegraphics[width=0.5\textwidth,height=5.6cm]{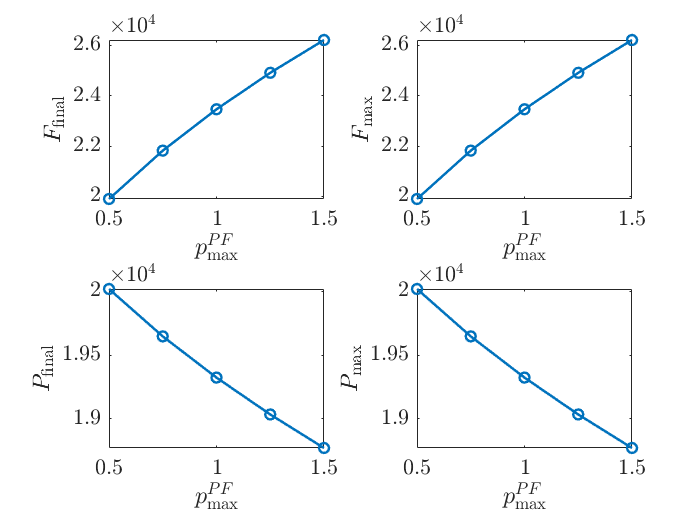}
}%
\subfigure[]{%
\label{fig:sensitivity_oat_KF}
\includegraphics[width=0.5\textwidth,height=5.6cm]{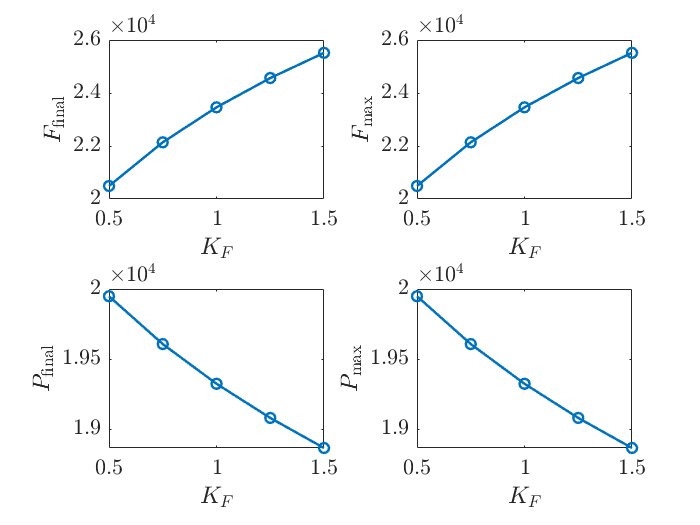}
}
\caption{Sensitivity of workforce outcomes to hiring throughput parameters, including the maximum postdoc to faculty transition rate and the faculty capacity scale.}
\label{fig:sensitivity_oat2}
\end{figure}

\noindent The influence of postdoc to faculty transition and hiring capacity is shown in Figure~\ref{fig:sensitivity_oat_pPFmax} and \ref{fig:sensitivity_oat_KF} respectively.
Across both, the approximately linear monotone response indicates that changes in hiring throughput propagate proportionally to downstream workforce stocks, with no evidence of saturation or threshold behavior within the explored parameter range. The figures show similar trend and it suggests that increasing either the maximum postdoc to faculty transition probability or the faculty capacity scale leads to higher faculty populations and reduced postdoctoral accumulation. The results demonstrate that sustained reductions in congestion require structural expansion of hiring capacity rather than adjustments to upstream degree flows alone.
The absence of nonlinear response suggests that marginal changes to transition probabilities are insufficient to relieve accumulation when faculty hiring capacity remains fixed. Also, we examine the sensitivity of the graduate progression to both faculty and postdoctoral position.
This is shown in Figure~\ref{fig:sensitivity_oat3}.

\begin{figure}[H]
\centering
\subfigure[]{%
\label{fig:sensitivity_oat_pGF}
\includegraphics[width=0.5\textwidth,height=6.5cm]{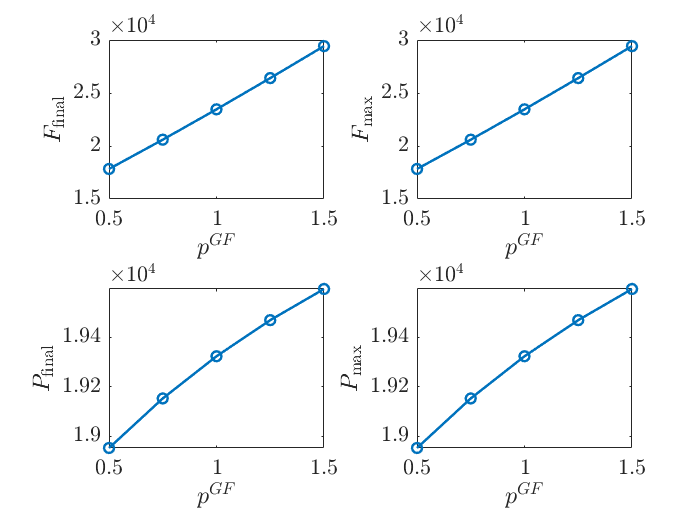}
}%
\subfigure[]{%
\label{fig:sensitivity_oat_pGP}
\includegraphics[width=0.5\textwidth,height=6.5cm]{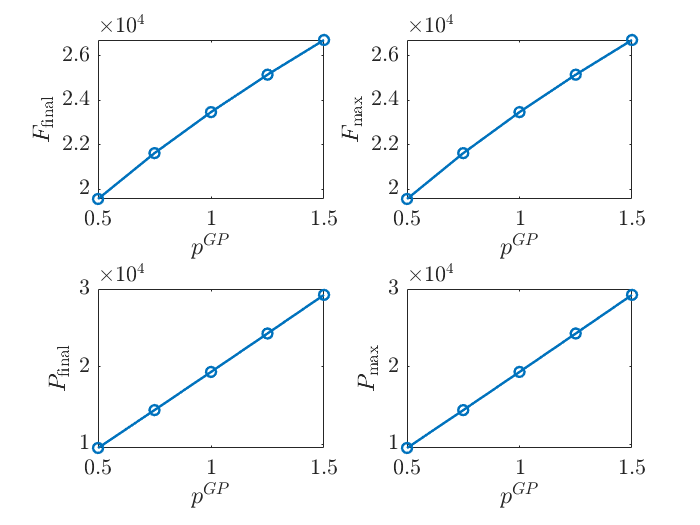}
}
\caption{Sensitivity of workforce outcomes to graduate progression pathways, (a) direct transitions to faculty and (b) transitions into postdoctoral positions.}
\label{fig:sensitivity_oat3}
\end{figure}

\noindent Figures~\ref{fig:sensitivity_oat_pGF} and \ref{fig:sensitivity_oat_pGP} examine sensitivity to graduate progression probabilities. In both cases, the near linear sensitivity indicates that upstream progression choices primarily redistribute congestion between compartments rather than eliminating it. These show that an increase in direct transitions from graduate study to faculty raises faculty populations and moderately increases postdoctoral levels, while increasing transitions into postdoctoral positions produces substantial growth in postdoctoral accumulation. These results suggest that upstream pathway choices influence where congestion manifests but do not eliminate congestion when downstream hiring remains constrained.\\

\noindent To complement the local perturbation analysis, we conducted a global sensitivity analysis using partial rank correlation coefficients. Figure~\ref{fig:sensitivity_oat4} summarizes the correlations between model parameters and final faculty size across a broad region of parameter space. Faculty outcomes are most strongly and positively associated with hiring throughput and capacity parameters, and negatively associated with faculty exit and competition intensity. Postdoctoral exit also exhibits a strong negative association, reflecting its role in reducing the pool of candidates available for faculty replenishment. The smooth and monotonic structure of these correlations indicates that the qualitative behavior of the model does not rely on finely tuned parameter values. Instead, long--run workforce outcomes are governed primarily by structural parameters controlling faculty turnover, hiring capacity, and competitive pressure. While progression parameters influence the size of the candidate pool, congestion emerges robustly whenever hiring capacity grows more slowly than candidate supply. This confirms that postdoctoral accumulation is an inherent feature of vacancy--constrained academic systems and persists across wide regions of parameter space.
\begin{figure}[H]
\centering
\includegraphics[width=0.9\textwidth,height=9cm]{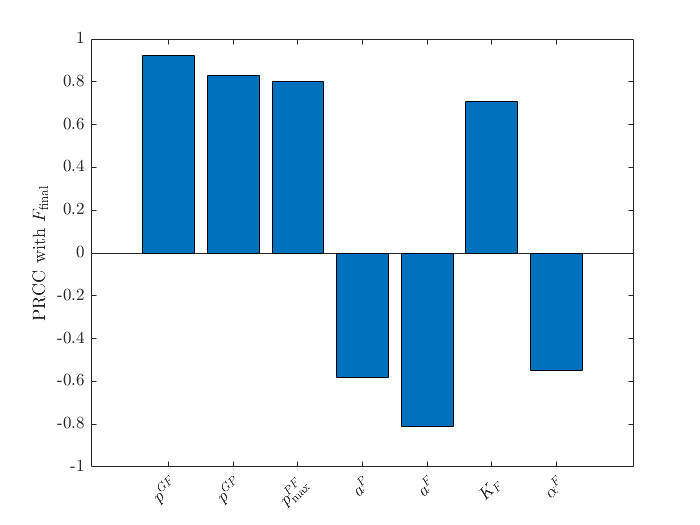}
\caption{Global sensitivity analysis using partial rank correlation coefficients showing the relative influence of model parameters on final faculty population size.}
\label{fig:sensitivity_oat4}
\end{figure}
 \noindent The sensitivity analysis focuses on parameters governing downstream transitions, exits, and hiring constraints. Completion probabilities at the undergraduate and graduate levels are not included in the analysis because they enter the model through algebraic reconstruction of upstream stocks rather than through dynamic transition equations. In particular, undergraduate and graduate populations are reconstructed as $U_t = B_S(t)/g^U$ and $G_t = (M_S(t)+D_S(t))/g^G$, so variation in $g^U$ and $g^G$ primarily rescales candidate supply rather than altering the structural mechanisms driving congestion or hiring outcomes. Consequently, uncertainty in these parameters affects magnitudes but not the qualitative dynamics identified here. In addition, the analysis focuses on monotonic effects of individual parameters and does not explicitly capture higher--order interactions. While partial rank correlation coefficients identify dominant single--parameter influences, variance--based approaches such as Sobol indices would be required to quantify interaction effects among parameters.

\subsection{Model analysis}
 \noindent We analyze the discrete--time academic pipeline model to establish that it is mathematically well--posed as a population accounting system. In particular, we show that model trajectories remain within a physically meaningful feasible region and that all state variables remain bounded under bounded inflow and positive exit. We also characterize conditions under which accumulation occurs at the postdoctoral and faculty stages. We define the feasible region
\[
\Omega = \{(U,G,P,F)\in\mathbb{R}^4 : U\geq 0,\; G\geq 0,\; P\geq 0,\; F\geq 0\},
\]
which represents nonnegative population states. Feasibility of the model means that trajectories start and remain in \(\Omega\) for all future time steps.

\begin{theorem}[Positivity and feasibility]\label{thm:positivity}
Assume \(U_0,G_0,P_0,F_0 \geq 0\) and \(B(t)\geq 0\) for all \(t\geq 0\). Suppose the coefficients satisfy
\[
g^U+a^U \leq 1,\qquad g^G+a^G \leq 1,\qquad p^{PF}(F_t)+a^P \leq 1 \quad \text{for all } t,
\]
and that the transition functions \(p^{UG}(G_t)\) and \(p^{PF}(F_t)\) are nonnegative for all \(t\).
Then the solution satisfies
\[
(U_t,G_t,P_t,F_t)\in\Omega \quad \text{for all } t\geq 0.
\]
\end{theorem}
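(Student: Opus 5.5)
We argue by induction on \(t\), using the recurrence relations in \eqref{model}. The base case \(t=0\) is the hypothesis \(U_0,G_0,P_0,F_0\geq 0\). For the inductive step we assume \((U_t,G_t,P_t,F_t)\in\Omega\) and show that each component at time \(t+1\) is nonnegative. In each update, the new value is a nonnegative retention coefficient times the current stock, plus inflow terms that are products of nonnegative quantities.

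The four components are handled as follows.
\begin{itemize}
\item \(U_{t+1}\): the coefficient \(1-g^U-a^U\) is nonnegative by assumption, \(U_t\geq 0\) by induction, and \(B(t)\geq 0\). Hence \(U_{t+1}\geq 0\).
\item \(G_{t+1}\): the coefficient \(1-g^G-a^G\) is nonnegative by assumption, and \(G_t\geq 0\). The inflow \(p^{UG}(G_t)\,g^U U_t\) is nonnegative, since \(p^{UG}\geq 0\) by hypothesis (and automatically so for the truncated form \(p^{UG}_{\max}(1-G_t/K_G)_+\)), \(g^U\geq 0\), and \(U_t\geq 0\). Hence \(G_{t+1}\geq 0\).
\item \(P_{t+1}\): the coefficient \(1-p^{PF}(F_t)-a^P\) is nonnegative by assumption, \(P_t\geq 0\), and \(p^{GP}g^G G_t\geq 0\). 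Hence \(P_{t+1}\geq 0\).
\item \(F_{t+1}\): \(a^F\leq 1\), being an exit probability, so \((1-a^F)F_t\geq 0\). The terms \(p^{GF}g^G G_t\) and \(p^{PF}(F_t)P_t\) are nonnegative. Hence \(F_{t+1}\geq 0\).
\end{itemize}
Throughout we use the standing convention that all rate parameters (\(g^U,g^G,a^U,a^G,a^P,a^F,p^{GP},p^{GF}\)) lie in \([0,1]\). We state this explicitly at the start of the proof, since the theorem lists only the composite constraints.

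The main obstacle is that the theorem is stated for the unconstrained system \eqref{model}, while the paper's simulations use the vacancy-limited updates. We should therefore remark that positivity also holds there.

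For \(F_{t+1}=(1-a^F)F_t+H_t\), note that \(H_t=\min\{V_t,\,C^{dir}_t+C^{post}_t\}\geq 0\), because both arguments are nonnegative on \(\Omega\).

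For \(P_{t+1}\) we need \(H^{post}_t\leq C^{post}_t=p^{PF}(F_t)P_t\). This follows from the proportional allocation \eqref{eq6}, using \(H_t\leq C^{dir}_t+C^{post}_t\):
\[
H^{post}_t=H_t\,\frac{C^{post}_t}{C^{dir}_t+C^{post}_t}\leq C^{post}_t .
\]
If \(C^{dir}_t+C^{post}_t=0\), we set \(H^{post}_t=0\). It then follows that
\[
P_{t+1}\geq \bigl(1-p^{PF}(F_t)-a^P\bigr)P_t+p^{GP}g^GG_t\geq 0 .
\]

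Finally, for the data-anchored reconstruction \eqref{eq:reconstructUG}, nonnegativity of \(U_t\) and \(G_t\) is immediate from nonnegative degree counts. This closes the induction in either regime.
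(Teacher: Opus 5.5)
Your proof is correct and takes the same approach as the paper: induction on $t$, checking componentwise that each update is a nonnegative retention coefficient times a nonnegative stock plus nonnegative inflow terms, with $a^F\leq 1$ used for the faculty equation. Your extras are valid but go beyond the paper's proof, which treats only the unconstrained system \eqref{model}: you state the $[0,1]$ convention on the rate parameters explicitly, and you check positivity under the vacancy-limited updates via $0\leq H^{post}_t\leq C^{post}_t$.
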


\begin{proof}
We show that the update map sends \(\Omega\) into itself and conclude by induction.
Assume \((U_t,G_t,P_t,F_t)\in\Omega\)

\begin{enumerate}[(i)]
\item Since \(g^U+a^U\leq 1\) and \(B(t)\geq 0\),
\[
U_{t+1}=(1-g^U-a^U)U_t + B(t) \geq 0.
\]

\item Since \(g^G+a^G\leq 1\) and \(p^{UG}(G_t), g^U, U_t\geq 0\),
\[
G_{t+1}=(1-g^G-a^G)G_t + p^{UG}(G_t)g^U U_t \geq 0.
\]

\item Since \(p^{PF}(F_t)+a^P\leq 1\) and \(p^{GP},g^G,G_t\geq 0\),
\[
P_{t+1}=\bigl(1-p^{PF}(F_t)-a^P\bigr)P_t + p^{GP}g^G G_t \geq 0.
\]

\item Since \(a^F\leq 1\) and all inflow terms are nonnegative,
\[
F_{t+1}=(1-a^F)F_t + p^{GF}g^G G_t + p^{PF}(F_t)P_t \geq 0.
\]
\end{enumerate}

\noindent Thus the map preserves \(\Omega\). Since the initial condition lies in \(\Omega\), induction implies feasibility for all \(t\geq 0\).
\end{proof}

\begin{theorem}[Boundedness under bounded inflow and positive exit]\label{thm:boundedness}
Assume \(0\leq B(t)\leq B_{\max}\) for all \(t\), and assume
\[
g^U+a^U>0,\qquad g^G+a^G>0,\qquad a^P>0,\qquad a^F>0.
\]
Under the constraints of Theorem~\ref{thm:positivity}, all trajectories \((U_t,G_t,P_t,F_t)\) are bounded for all \(t\geq 0\).
\end{theorem}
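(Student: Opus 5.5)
The approach is a triangular cascade down the pipeline. Each stage is a scalar linear recursion with contraction factor strictly below one, driven by a source built from upstream stages. Once the upstream stage is known to be bounded, the downstream stage inherits a bound. Theorem~\ref{thm:positivity} gives nonnegativity of all states and coefficients, so every estimate can be done with one-sided inequalities.

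\textbf{Scalar comparison lemma.} Suppose $x_t\ge 0$ satisfies $x_{t+1}\le (1-c)x_t + b$ with $c\in(0,1]$ and $b\ge 0$ constant. Then
\[
x_t\le \max\{x_0,\, b/c\}\quad\text{for all } t.
\]
The proof is by induction. If $x_t\le M:=\max\{x_0,b/c\}$, then $x_{t+1}\le (1-c)M+cM=M$, using $b\le cM$. An equally good form is $x_t\le (1-c)^t x_0 + b/c$, obtained by summing the geometric series.

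\textbf{The cascade.}
\begin{enumerate}
\item \emph{Undergraduates.} Apply the lemma to $U$ with $c_U=g^U+a^U\in(0,1]$ and $b=B_{\max}$. This gives $U_t\le \bar U:=\max\{U_0, B_{\max}/(g^U+a^U)\}$.
\item \emph{Graduates.} The source satisfies $p^{UG}(G_t)g^UU_t\le p^{UG}_{\max}g^U\bar U$. Here I use $0\le p^{UG}(G)\le p^{UG}_{\max}$ for $G\ge 0$, which follows from the truncated form $p^{UG}_{\max}(1-G/K_G)_+$. The lemma with $c_G=g^G+a^G>0$ then bounds $G_t$ by $\bar G$.
\item \emph{Postdocs.} Since $p^{PF}(F_t)\ge 0$, we have $1-p^{PF}(F_t)-a^P\le 1-a^P$. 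Hence $P_{t+1}\le (1-a^P)P_t + p^{GP}g^G\bar G$, and the lemma with $c=a^P$ gives a bound $\bar P$.
\item \emph{Faculty.} The inflow satisfies $p^{GF}g^GG_t+p^{PF}(F_t)P_t\le p^{GF}g^G\bar G + \bar p\,\bar P$. Here $\bar p$ can be taken to be $1$, because the hypothesis $p^{PF}+a^P\le 1$ gives $p^{PF}\le 1$; alternatively one can use $p^{PF}_{\max}$ from the explicit form. The lemma with $c=a^F>0$ bounds $F_t$.
\end{enumerate}

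\textbf{Main obstacle.} The only delicate point is uniform boundedness of the state-dependent transition functions. The positivity theorem assumes only nonnegativity, so I would either invoke the explicit functional forms, which are bounded by $p^{UG}_{\max}$ and $p^{PF}_{\max}$, or note that $p^{PF}\le 1-a^P$ follows from the stated constraint. For $p^{UG}$, an explicit bound $\sup p^{UG}<\infty$ is needed and should be stated as part of ``the constraints''. Note also that $a^F\le 1$ must hold for the lemma's range $c\in(0,1]$; this is implicit in $a^F$ being a probability, as used in Theorem~\ref{thm:positivity}(iv).

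\textbf{Vacancy-limited regime.} The same cascade covers this regime if one wishes. Since $H_t\le V_t=a^FF_t$, we get $F_{t+1}\le F_t$, which is trivially bounded. For $P$, drop $-H^{\rm post}_t\le 0$ to get $P_{t+1}\le (1-a^P)P_t+p^{GP}g^G\bar G$. Nonnegativity there follows from $H^{\rm post}_t\le C^{\rm post}_t=p^{PF}P_t$, which holds because $H_t\le C^{\rm dir}_t+C^{\rm post}_t$.
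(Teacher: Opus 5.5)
Your proof is correct and follows essentially the same route as the paper: a triangular cascade $U\to G\to P\to F$ of scalar linear comparison inequalities with contraction factors $1-(g^U+a^U)$, $1-(g^G+a^G)$, $1-a^P$ and $1-a^F$. Your version is more careful than the paper's in three respects: you state the comparison lemma explicitly, you point out that bounds on $p^{UG}$ and $p^{PF}$ are needed (the paper silently replaces both by $1$), and you note that $a^F\le 1$ is required (the paper uses it tacitly); the argument itself is the same.
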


\begin{proof}
We derive comparison inequalities for each stock.

\noindent For undergraduates, we have
\[
U_{t+1} \leq (1-g^U-a^U)U_t + B_{\max}.
\]
Let \(c_U = 1-(g^U+a^U)\). Since \(g^U+a^U>0\), we have \(0\leq c_U<1\). Iteration yields
\[
U_t \leq U_0 + \frac{B_{\max}}{g^U+a^U},
\]
so \(U_t\) is bounded.

\noindent For graduates, we have
\[
G_{t+1} \leq (1-g^G-a^G)G_t + g^U U_t.
\]
Let \(c_G=1-(g^G+a^G)\) with \(0\leq c_G<1\). Since \(U_t\) is bounded, standard comparison implies boundedness of \(G_t\).

\noindent For postdocs, we have
\[
P_{t+1} \leq (1-a^P)P_t + p^{GP}g^G G_t.
\]
Since \(a^P>0\), let \(c_P=1-a^P\in[0,1)\). Boundedness of \(G_t\) implies boundedness of \(P_t\).

\noindent For faculty, we have
\[
F_{t+1} \leq (1-a^F)F_t + p^{GF}g^G G_t + P_t.
\]
With \(c_F=1-a^F\in[0,1)\) and bounded \(G_t,P_t\), it follows that \(F_t\) is bounded. Thus, all state variables are bounded.
\end{proof}

\begin{theorem}[One-step accumulation conditions]\label{thm:accumulation}
Define increments \(\Delta P_t=P_{t+1}-P_t\) and \(\Delta F_t=F_{t+1}-F_t\). Then
\[
\Delta P_t>0 \iff p^{GP}g^G G_t > \bigl(p^{PF}(F_t)+a^P\bigr)P_t,
\]
and
\[
\Delta F_t>0 \iff p^{GF}g^G G_t + p^{PF}(F_t)P_t > a^F F_t.
\]
\end{theorem}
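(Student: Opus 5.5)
The argument is a direct computation from the unconstrained recurrences in \eqref{model}. No induction or comparison argument is needed, because the statement concerns a single time step only. First I subtract \(P_t\) from both sides of the postdoctoral update. Then I subtract \(F_t\) from both sides of the faculty update. Each increment then becomes an explicit difference of inflow and outflow terms, and the claimed equivalences are just the sign conditions on these differences.

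Concretely, for the postdoctoral stage,
\[
\Delta P_t = \bigl(1-p^{PF}(F_t)-a^P\bigr)P_t + p^{GP}g^G G_t - P_t = p^{GP}g^G G_t - \bigl(p^{PF}(F_t)+a^P\bigr)P_t .
\]
So \(\Delta P_t>0\) holds exactly when the graduate inflow \(p^{GP}g^G G_t\) strictly exceeds the combined outflow \(\bigl(p^{PF}(F_t)+a^P\bigr)P_t\). Likewise, for the faculty stage,
\[
\Delta F_t = (1-a^F)F_t + p^{GF}g^G G_t + p^{PF}(F_t)P_t - F_t = p^{GF}g^G G_t + p^{PF}(F_t)P_t - a^F F_t ,
\]
which gives the second equivalence. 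Both directions of each ``iff'' follow at once, since we are comparing a real number with zero and merely moving terms to opposite sides of the inequality.

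The only point needing care is which regime the statement refers to. The displayed conditions contain \(p^{PF}(F_t)P_t\) rather than \(H_t\) or \(H_t^{post}\), so the statement concerns the unconstrained system \eqref{model}, and I will say so explicitly. No positivity or boundedness hypotheses (Theorems~\ref{thm:positivity}--\ref{thm:boundedness}) are needed for the algebra. They serve only to make the quantities meaningful as populations.

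Optionally, I would add a remark giving the vacancy-limited analogue, which follows by the same subtraction:
\[
\Delta P_t>0 \iff p^{GP}g^G G_t > H_t^{post}+a^P P_t,
\qquad
\Delta F_t>0 \iff H_t > a^F F_t .
\]
In the vacancy-limited regime \(H_t\le V_t=a^F F_t\), so faculty growth is then impossible and \(F\) is nonincreasing. This explains why excess supply accumulates at the postdoctoral stage. I do not expect any step to be a real obstacle; the main risk is only stating clearly which regime the theorem addresses.
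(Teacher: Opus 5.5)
Your proof is correct and matches the paper's: subtract $P_t$ and $F_t$ from the unconstrained updates in \eqref{model}, then read off the sign of the resulting inflow-minus-outflow expressions. Your observation that the statement concerns the unconstrained regime is accurate. So is your vacancy-limited analogue, where $H_t\le V_t=a^F F_t$ forces $\Delta F_t\le 0$, which goes slightly beyond what the paper states.
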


\begin{proof}
Subtracting \(P_t\) from the postdoctoral update equation gives
\[
\Delta P_t = -\bigl(p^{PF}(F_t)+a^P\bigr)P_t + p^{GP}g^G G_t,
\]
which yields the stated condition.  
Similarly, subtracting \(F_t\) from the faculty update equation gives
\[
\Delta F_t = -a^F F_t + p^{GF}g^G G_t + p^{PF}(F_t)P_t,
\]
which yields the stated condition.
\end{proof}

\section{Results}\label{sec3}
 \noindent Throughout this section, simulation results are interpreted as conditional workforce trajectories driven by observed degree flows and the specified pipeline structure, rather than as forecasts of realized undergraduate or graduate enrollments. We investigate the dynamic behavior of postdoctoral and faculty populations under different hiring regimes using numerical simulations of the model. The analysis focuses on how observed degree inflows propagate through the academic pipeline, how competition for faculty positions emerges under vacancy constraints, and which structural parameters govern long--run postdoctoral congestion. We begin by examining faculty dynamics under unconstrained hiring, then  the competition intensity and accumulation under vacancy--limited hiring. Results are presented in Figures~\ref{fac}--\ref{map}.
\vspace{10mm}
 \begin{figure}[H]
\centering
\subfigure[]{%
\label{fig:fig01_faculty_unconstrained_index}
\includegraphics[width=0.32\textwidth,  height = 5cm]{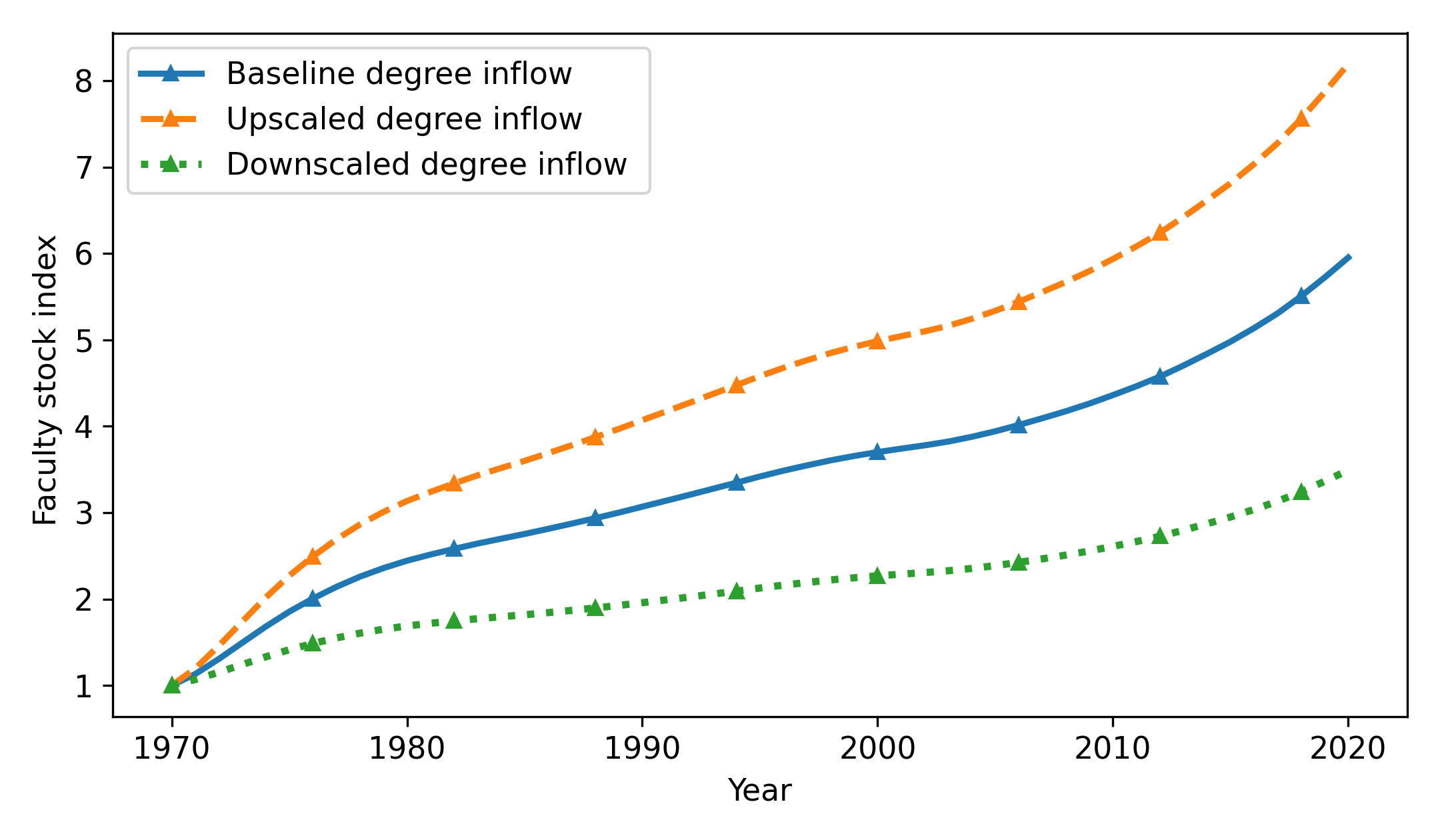}
}%
\subfigure[]{%
\label{fig:fig02_market_pressure_unconstrained}
\includegraphics[width=0.32\textwidth, height = 5cm]{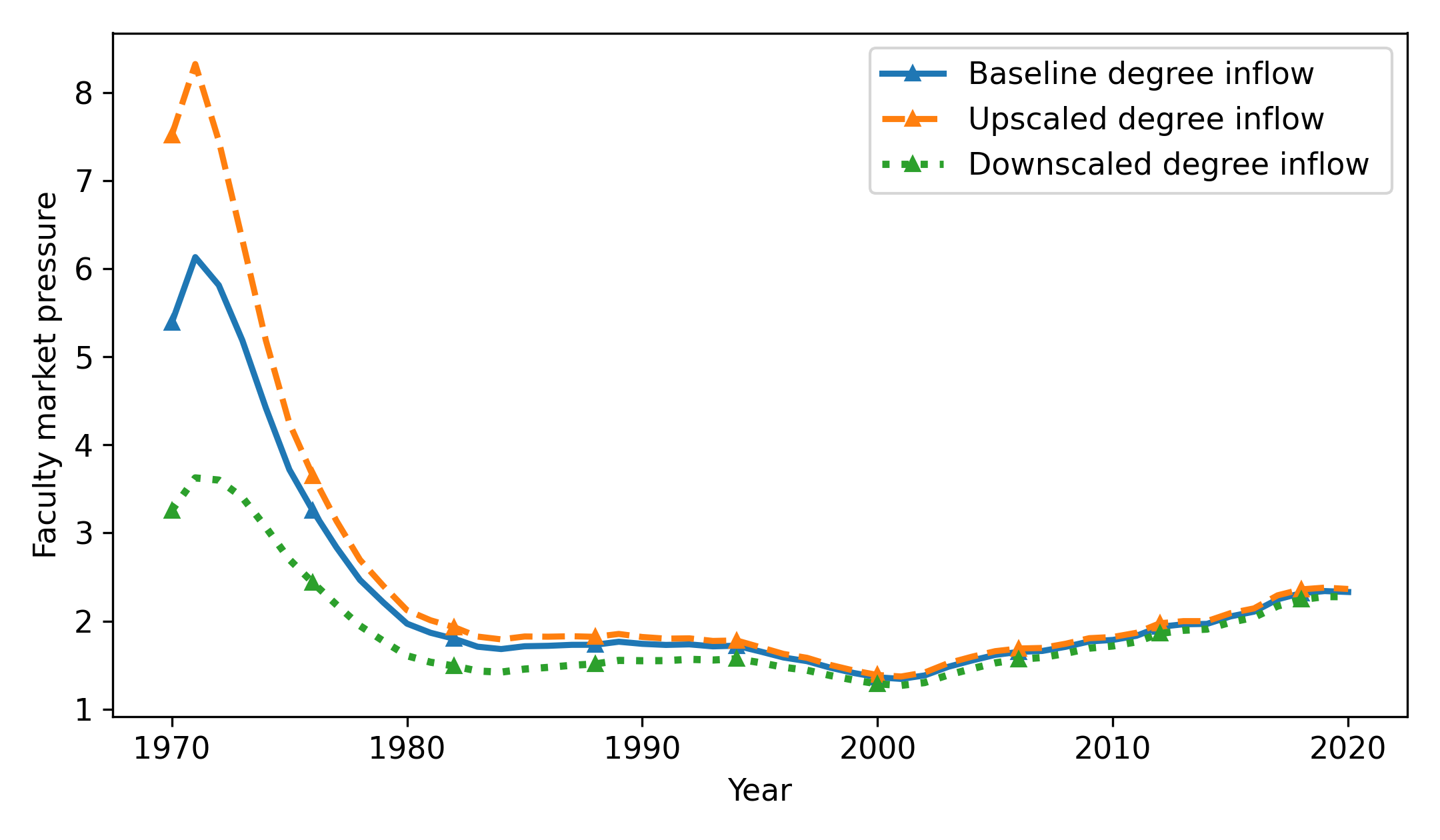}
}%
\subfigure[]{%
\label{fig:fig04_competition_intensity_vacancy_limited}
\includegraphics[width=0.32\textwidth, height =5cm]{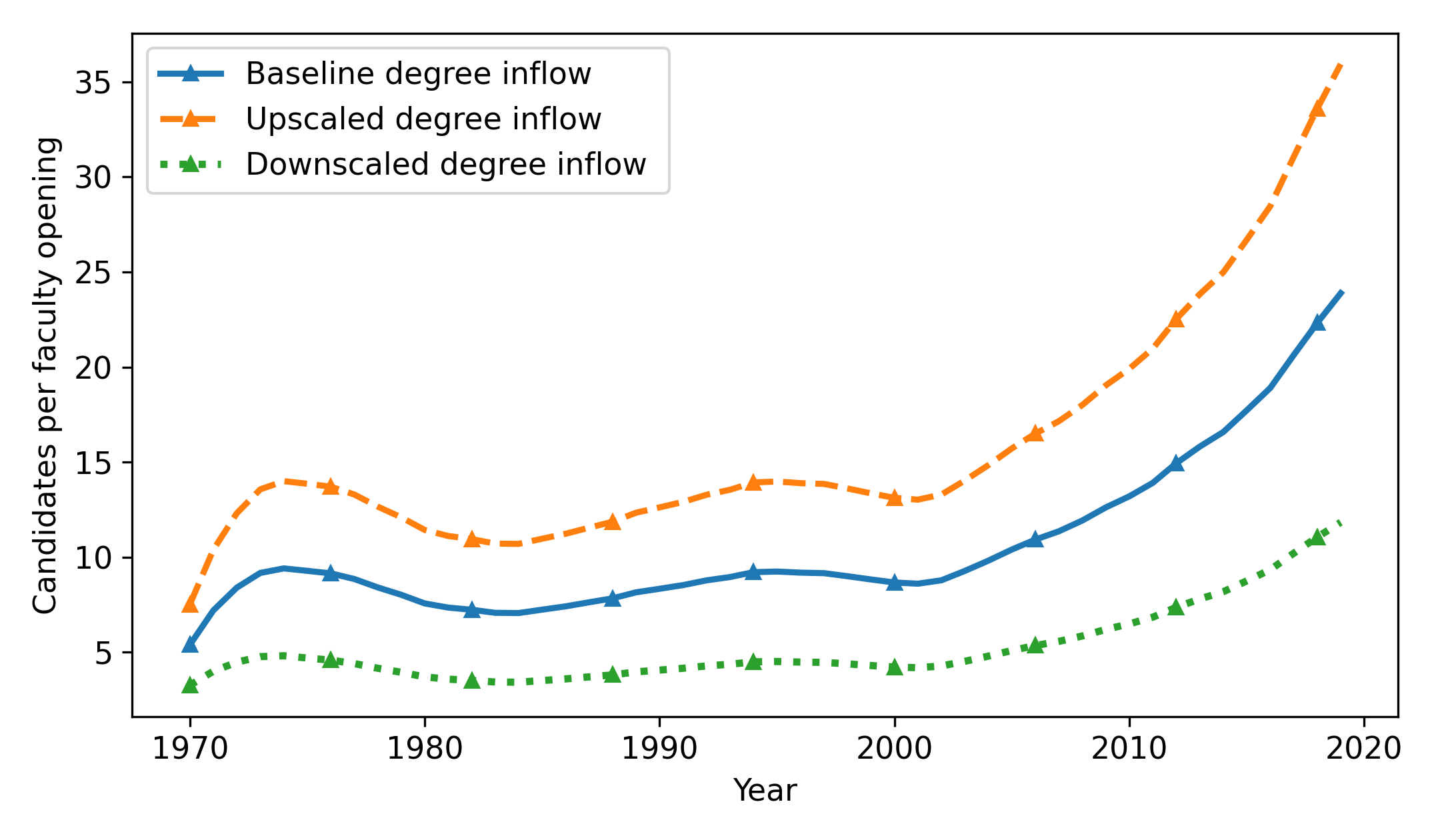}
}
\caption{Faculty trajectories, market pressure, and competition intensity under alternative degree inflow scenarios. (a) faculty trajectories under unconstrained dynamics, where hiring is governed solely by modeled transition rates rather than vacancy availability.  (b) faculty market pressure under the same unconstrained setting, defined as the ratio of potential faculty candidates to openings generated by faculty exits.  (c) competition intensity for faculty positions under vacancy limited hiring, defined as candidates per available opening. }\label{fac}
\end{figure}
\noindent Figures~\ref{fac} presents faculty dynamics under unconstrained and vacancy-limited hiring regimes, illustrating how degree inflow influences faculty growth and competition intensity under differing structural constraints. Figure~\ref{fig:fig01_faculty_unconstrained_index} shows the faculty trajectories across degree inflow scenarios when hiring is governed solely by modeled transition rates and not restricted by vacancy availability. This setting isolates the influence of upstream degree inflow on faculty growth in the absence of institutional hiring bottlenecks. The figure shows that faculty stocks increase under expanded degree inflow and grow more slowly under reduced inflow, with differences that widen progressively over time. This behavior indicates that when hiring capacity adjusts freely, faculty size remains sensitive to changes in upstream supply, and long run faculty levels can diverge substantially across degree inflow scenarios. \\

\noindent Figure~\ref{fig:fig02_market_pressure_unconstrained} quantifies faculty market pressure under the same unconstrained regime, where pressure is defined as the ratio of potential faculty candidates to openings generated by faculty exits. The purpose of this figure is to assess whether changes in degree inflow alter the balance between candidate supply and replacement demand even when hiring is not vacancy limited. The figure shows that market pressure increases when degree inflow is scaled upward and decreases when inflow is reduced. This indicates that upstream expansion generates candidate pools that grow faster than exit driven openings, leading to sustained competition even in the absence of explicit hiring constraints. The result highlights that slow faculty turnover alone can generate excess competition, independent of institutional hiring limits. Figure~\ref{fig:fig04_competition_intensity_vacancy_limited} shifts attention to the vacancy--limited regime and shows competition intensity for faculty positions when hiring is restricted by available openings. Competition intensity is defined as the number of candidates per faculty opening. In contrast to the unconstrained case, the figure shows that competition responds more strongly to changes in degree inflow under vacancy--limited hiring. Expanded inflow produces a pronounced rise in competition intensity, while reduced inflow dampens competition throughout the time horizon. This comparison demonstrates that vacancy constraints amplify the effects of upstream degree changes on competition, and transforming supply increases into congestion rather than faculty expansion. We next examine how vacancy--limited hiring reshapes postdoctoral dynamics and hiring outcomes. Postdoctoral trajectories across degree inflow scenarios, the composition of faculty hires, and postdoctoral projections are shown in Figure \ref{postdoc}.
\\\\

 \begin{figure}[H]
\centering
\subfigure[]{%
\label{fig:fig03_postdoc_vacancy_limited_index}
\includegraphics[width=0.32\textwidth,  height = 5cm]{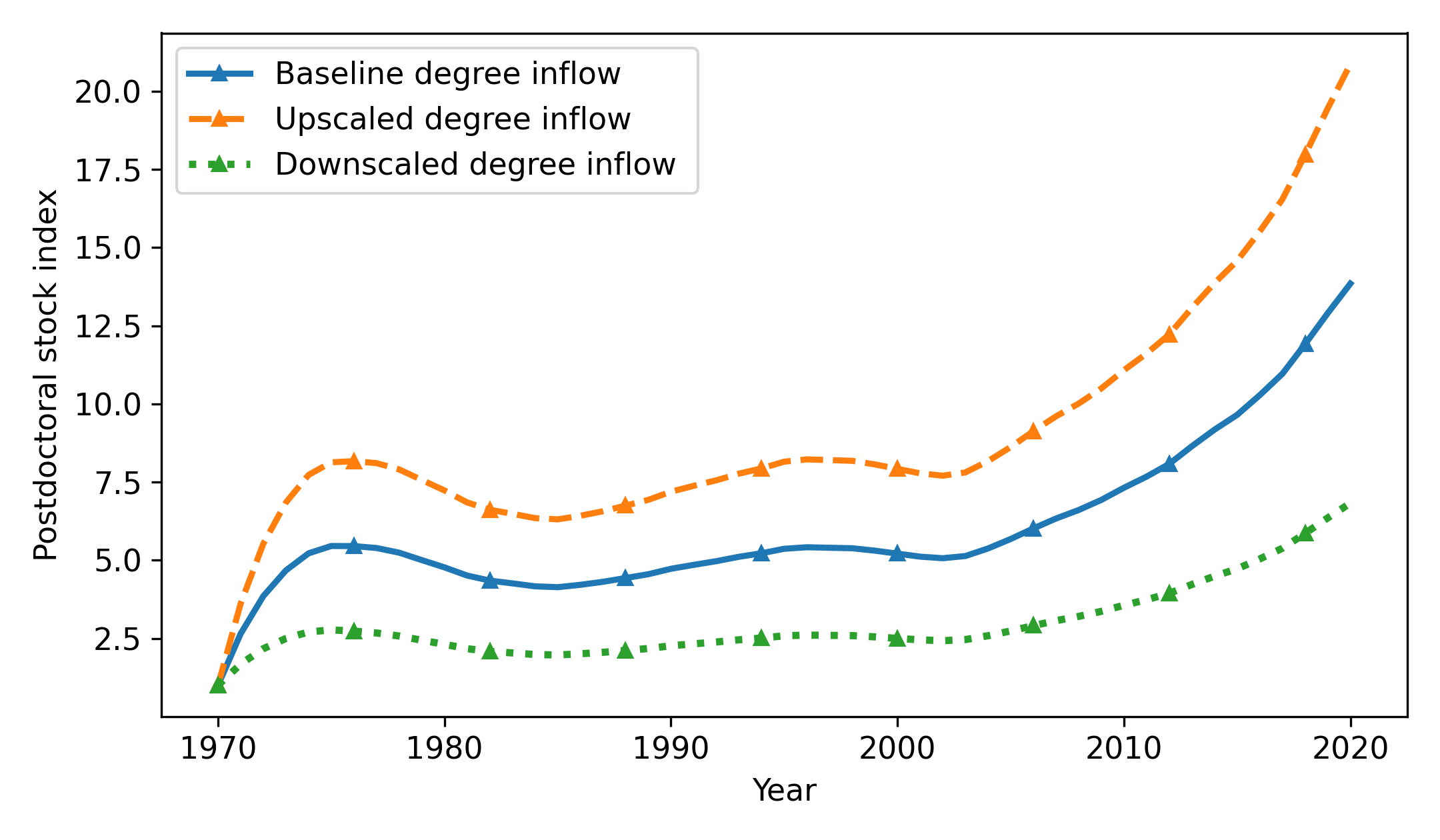}
}%
\subfigure[]{%
\label{fig:fig05_hire_composition_postdoc_share}
\includegraphics[width=0.32\textwidth, height = 5cm]{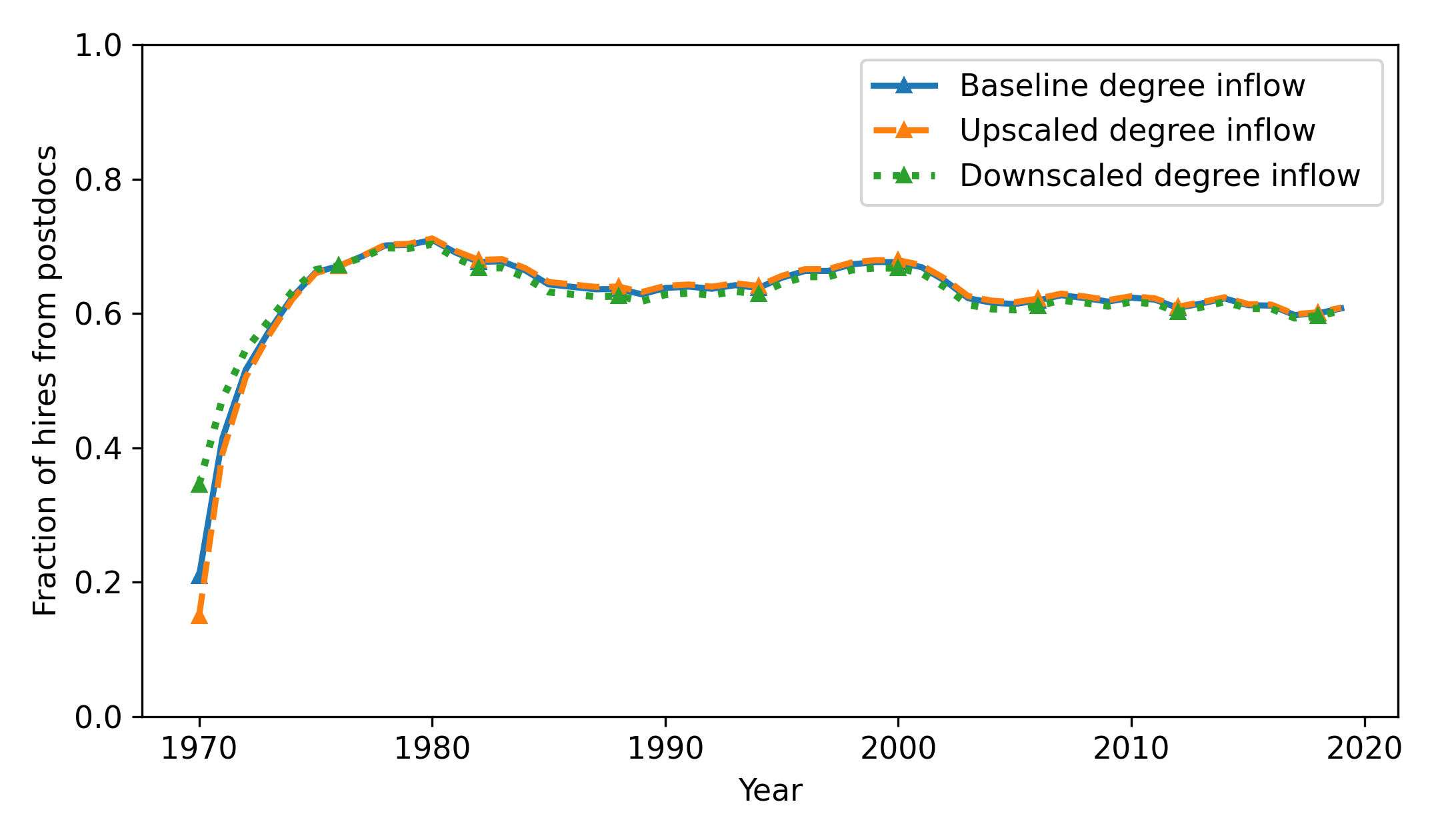}
}%
\subfigure[]{%
\label{fig:fig06_postdoc_projection_index}
\includegraphics[width=0.32\textwidth, height =5cm]{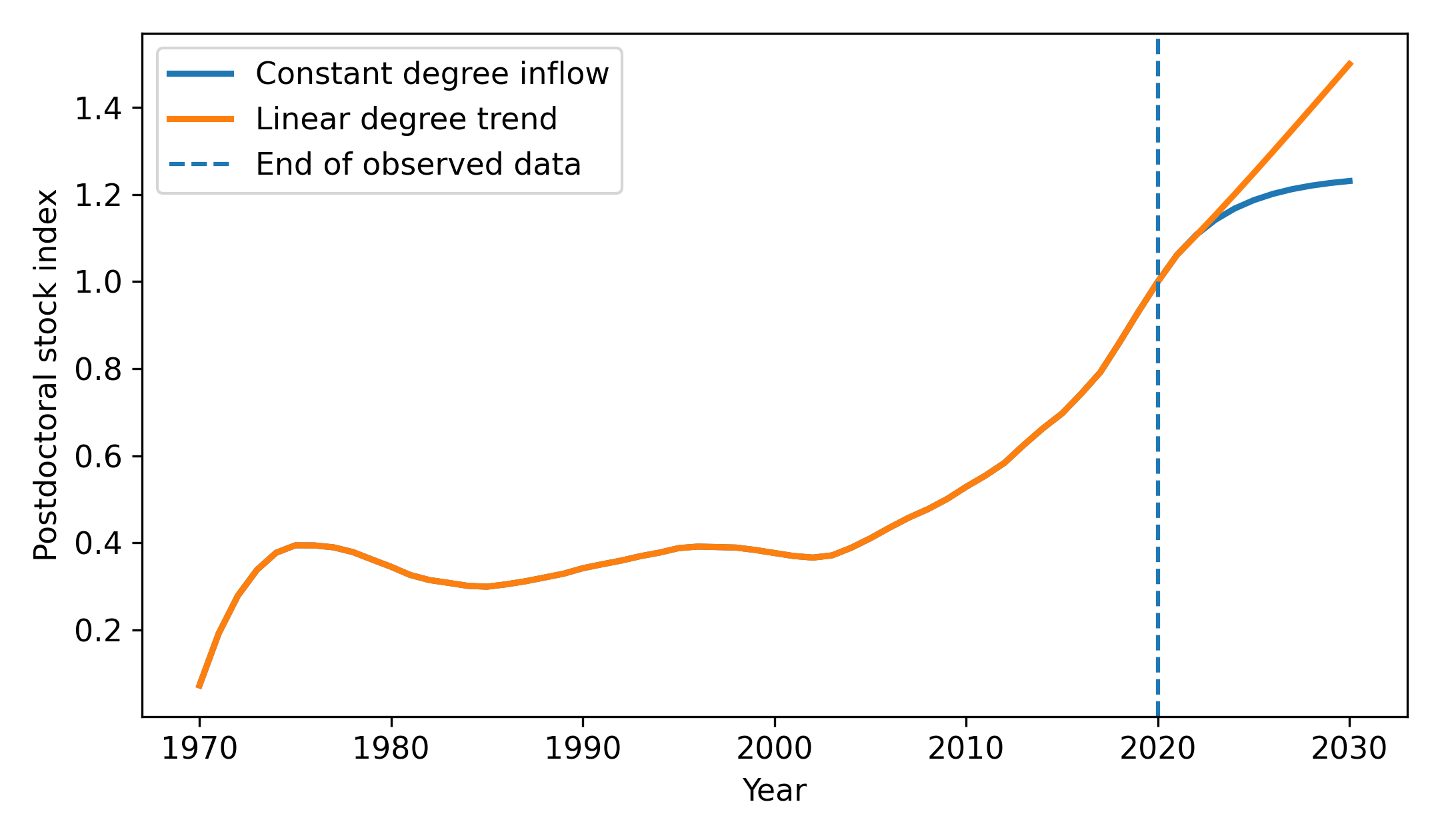}
}
\caption{Postdoctoral trajectories, hiring composition, and postdoctoral projections under vacancy limited hiring. (a) postdoctoral stock trajectories across degree inflow scenarios. (b) fraction of faculty hires drawn from the postdoctoral pool. (c) postdoctoral projections under degree inflow. }\label{postdoc}
\end{figure}
\noindent Figure \ref{postdoc} presents postdoctoral outcomes under vacancy-limited hiring and show how variation in degree inflow affects postdoctoral accumulation and hiring outcomes. Figure \ref{fig:fig05_hire_composition_postdoc_share} examined whether increased degree inflow leads to postdoctoral accumulation when hiring capacity is fixed. It indicates that expanded degree inflow results in substantial postdoctoral accumulation relative to baseline, while reduced inflow suppresses postdoctoral growth. This pattern suggests that the postdoctoral stage functions as a buffer that absorbs excess supply when downstream hiring is constrained, preventing proportional growth in faculty positions. Furthermore, to determine the overall competition intensity from changes in hiring composition, we examines the composition of faculty hires by displaying the fraction of hires drawn from the postdoctoral pool under vacancy limited hiring as shown in  Figure~\ref{fig:fig05_hire_composition_postdoc_share}.  This shows that despite large differences in degree inflow and postdoctoral congestion, the share of hires originating from postdoctoral positions remains relatively stable over time which indicates that increased degree inflow intensifies competition without substantially altering the balance between direct and postdoctoral hiring channels.\\

\noindent Next, we assess how strongly projected postdoctoral growth depends on upstream degree assumptions under vacancy limited hiring. This is done by exploring the sensitivity of postdoctoral projections to assumptions about degree inflow beyond the observed data window and the result is shown in  Figure~\ref{fig:fig06_postdoc_projection_index} which shows that long run postdoctoral trajectories diverge markedly depending on whether degree inflow stabilizes or continues to increase, even though hiring capacity remains unchanged. This output demonstrates that uncertainty in future postdoctoral stocks is driven primarily by degree inflow dynamics rather than by hiring rules alone. Finally, we examine how structural workforce parameters shape long--run congestion outcomes. The terminal postdoc to faculty ratio, its geometric structure, and the timing of congestion are shown in Figure \ref{map}.
\begin{figure}[H]
\centering
\subfigure[]{%
\label{fig:fig07_heatmap_terminal_ratio}
\includegraphics[width=0.33\textwidth,  height = 5cm]{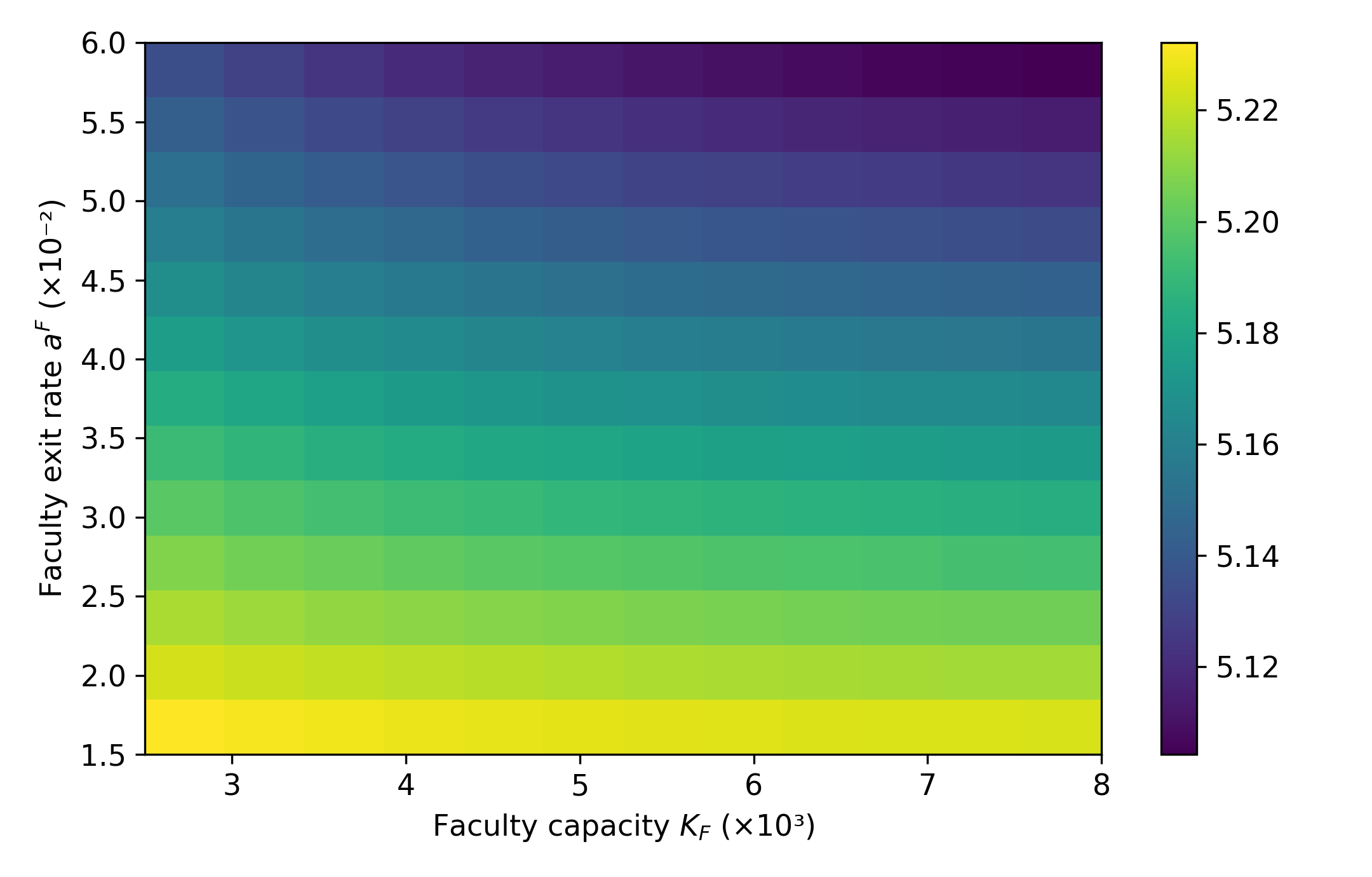}
}%
\subfigure[]{%
\label{fig:fig08_contour3d_terminal_ratio}
\includegraphics[width=0.34\textwidth, height = 5cm]{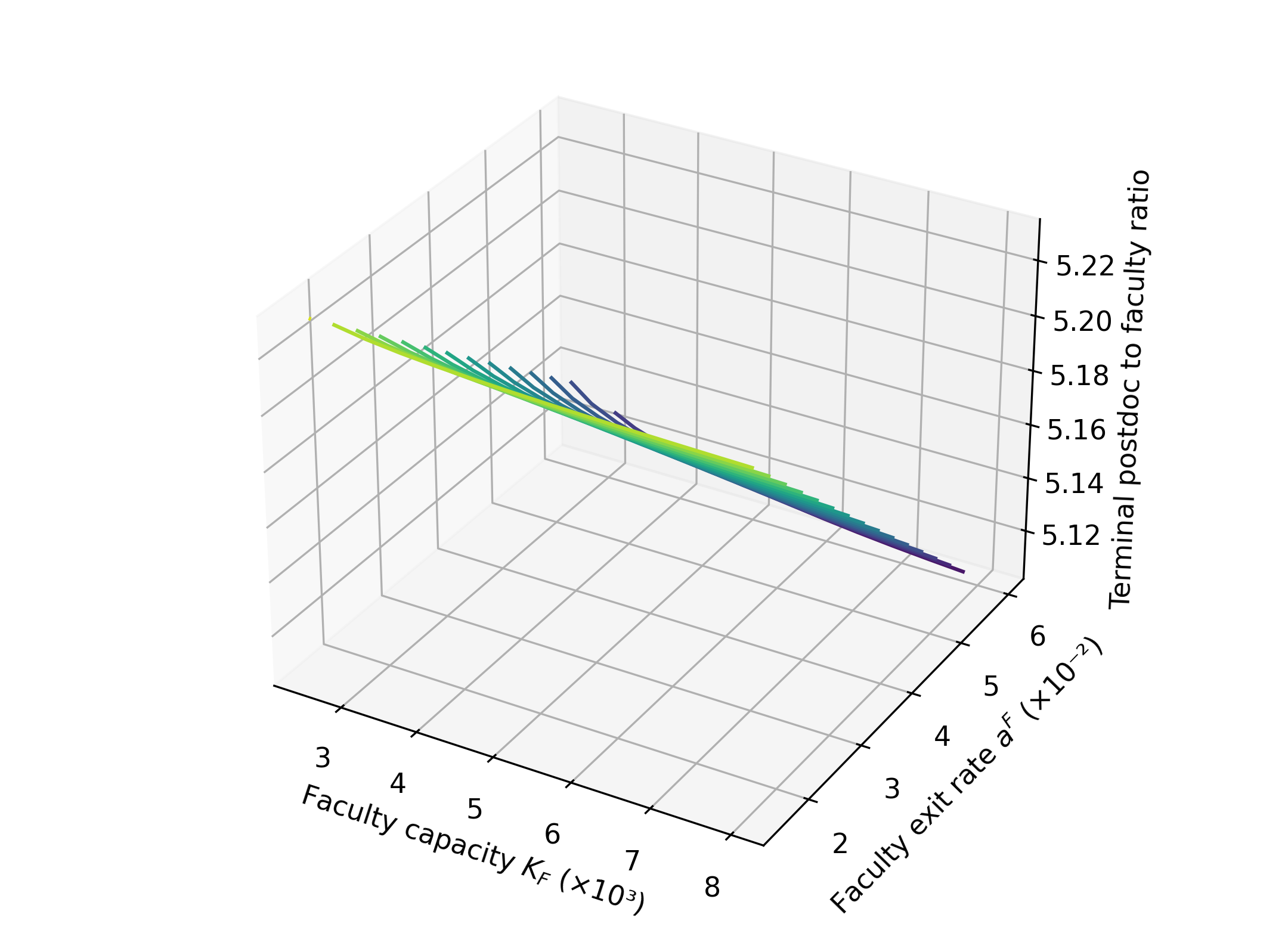}
}%
\subfigure[]{%
\label{fig:fig09_heatmap_first_year_threshold}
\includegraphics[width=0.33\textwidth, height =5cm]{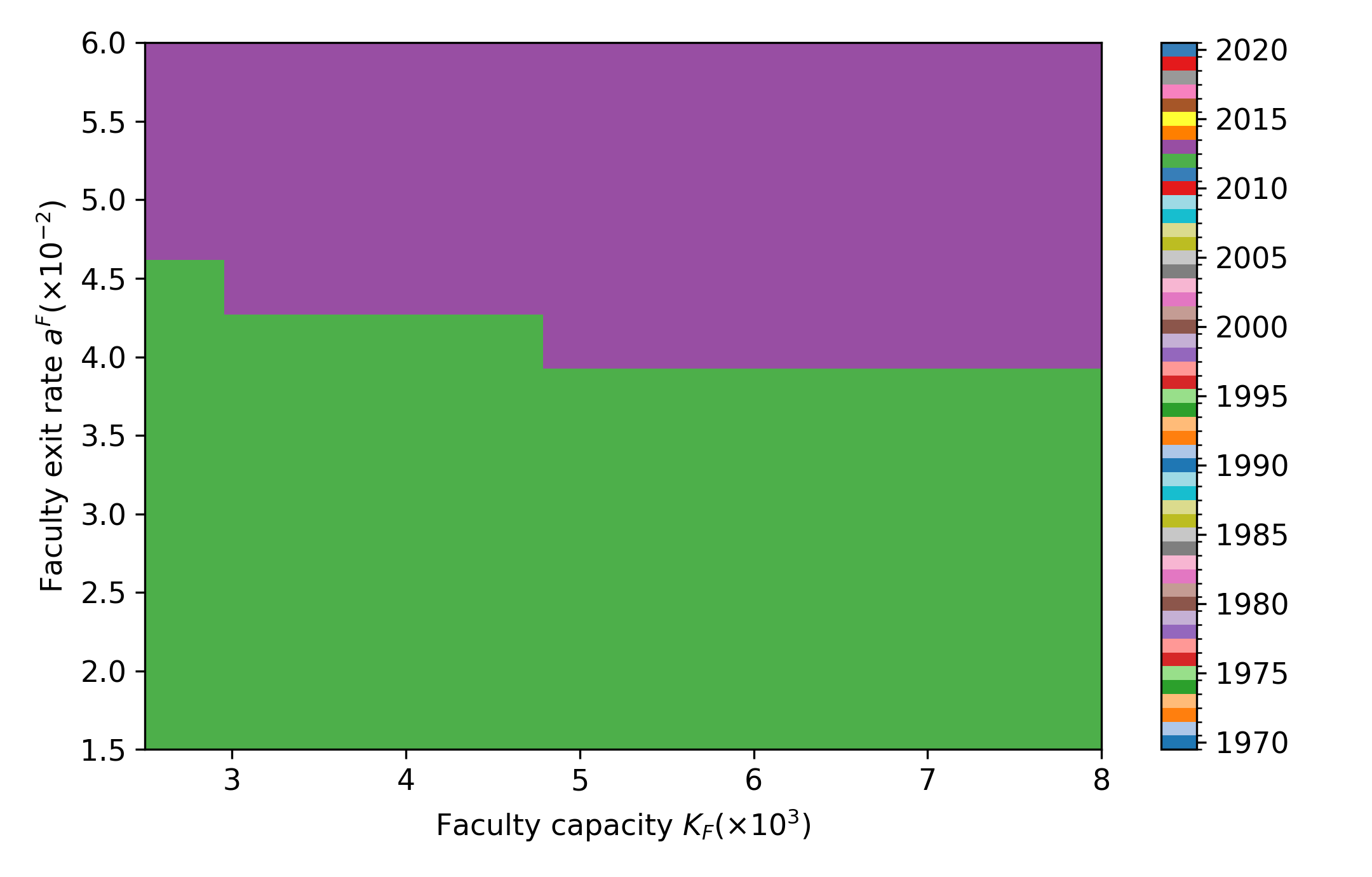}
}
\caption{Long--run postdoctoral congestion as a function of faculty exit rates and hiring capacity. (a)  heat map of the terminal postdoc to faculty ratio across combinations of faculty exit rate and capacity scale.  (b)  contour representation of the  terminal congestion surface, (c) first year in which the postdoc to faculty ratio exceeds a  congestion threshold. }\label{map}
\end{figure}
 \noindent Figure~\ref{fig:fig07_heatmap_terminal_ratio} presents a heat map of the terminal postdoc to faculty ratio as a function of the faculty exit rate and the faculty capacity scale. The purpose of this figure is to identify which structural parameters govern long run congestion at the postdoctoral stage. The figure shows systematic variation in the terminal ratio across parameter combinations, with lower exit rates and smaller capacity scales associated with higher postdoctoral burden relative to faculty. This suggests that slow faculty turnover and limited hiring capacity jointly reinforce congestion, independent of short-term degree fluctuations. \\
 
 \noindent Figure~\ref{fig:fig08_contour3d_terminal_ratio} provides a three dimensional contour representation of the same terminal congestion surface. The purpose of this visualization is to clarify the joint dependence of congestion on exit rates and capacity scales. The smooth contour structure indicates that similar congestion outcomes arise along continuous combinations of structural parameters, reinforcing the interpretation that postdoctoral congestion is an emergent system property rather than the result of isolated parameter values. Figure~\ref{fig:fig09_heatmap_first_year_threshold} shows the first year in which the postdoc to faculty ratio exceeds a specified congestion threshold, using a discrete color scale to emphasize timing differences. The purpose of this figure is to examine when congestion emerges under varying exit and capacity settings. The figure shows that under low exit rates and tight capacity, congestion appears earlier within the historical window, whereas higher exit rates or larger capacity scales delay its onset. This result highlights that both the magnitude and timing of postdoctoral congestion are governed by structural features of faculty turnover and hiring capacity. In general, these results demonstrate that expanding degree inflow does not translate into proportional faculty growth when hiring is constrained by vacancies. Instead, excess supply accumulates in the postdoctoral stage, intensifying competition while leaving hiring composition largely unchanged. Long--run congestion and its timing are controlled primarily by faculty exit rates and hiring capacity scales, indicating that structural workforce parameters play a dominant role in shaping academic career dynamics.

\section{Discussion}\label{sec4}
 \noindent The results demonstrate that downstream academic workforce outcomes in the mathematical sciences are governed primarily by structural constraints rather than by degree production alone. Accordingly, the contribution of this work lies in clarifying structural mechanisms that generate postdoctoral accumulation and constrained faculty growth, rather than in statistically estimating or validating workforce trajectories from data.
 Across all simulations, postdoctoral accumulation arises whenever faculty hiring is limited by turnover, even when degree production is held constant or reduced. This indicates that congestion at intermediate career stages is not a transient imbalance caused by short-term changes in degree output but instead reflects a persistent mismatch between the rate at which new candidates enter the hiring pool and the rate at which permanent academic positions become available. \\ 
 
 \noindent A central mechanism underlying these dynamics is vacancy--limited hiring. When faculty exits occur slowly, the number of available positions remains effectively fixed over long time horizons. Under these conditions, increases in candidate supply cannot be absorbed through faculty expansion and instead propagate backward through the system. The postdoctoral stage acts as the primary buffer for this imbalance, leading to sustained growth in postdoctoral populations. The persistence of this pattern across degree scenarios shows that reducing degree production alone does not resolve congestion when hiring capacity is constrained. The separation between the behavior of postdoctoral and faculty populations highlights the distinct roles of degree production and faculty turnover. Changes in degree output strongly affect the size of the postdoctoral pool and the intensity of competition for faculty positions, while faculty population levels remain largely unchanged under vacancy--limited hiring. This decoupling implies that faculty employment outcomes are insensitive to upstream growth unless turnover rates or hiring capacity are altered. As a result, policies or planning decisions based solely on degree counts risk misinterpreting the sources of workforce congestion. \\
 
 \noindent The analysis of competition intensity further clarifies how structural constraints shape career progression. As candidate supply increases relative to the number of available openings, competition intensifies and remains elevated over time. This competitive pressure is not alleviated through increased hiring under vacancy--limited dynamics, but instead becomes concentrated at the postdoctoral stage. The dominance of postdoctoral positions as the primary source of faculty hires reflects this accumulation, indicating that postdoctoral training has become an entrenched intermediate stage rather than a short transitional phase. Comparison with unconstrained hiring dynamics underscores the importance of explicitly representing hiring mechanisms. When hiring is not limited by vacancies, increases in candidate supply can be partially absorbed through faculty expansion, leading to lower sustained competition. In contrast, vacancy--limited hiring prevents such adjustment and locks excess supply into temporary positions. This contrast shows that assumptions about hiring constraints fundamentally alter system behavior and must be made explicit when interpreting observed workforce trends. Long--run outcomes are shown to be robust across a wide range of structural parameters. \\
 
 \noindent Sensitivity analyses reveal smooth and gradual dependence of congestion on faculty exit rates and capacity scales, with no abrupt transitions that eliminate accumulation. Lower exit rates consistently produce higher long--run postdoctoral to faculty ratios, while increases in capacity scale moderate but do not remove congestion. These patterns indicate that postdoctoral accumulation is a generic outcome of slow faculty turnover rather than a consequence of finely tuned parameter choices. The timing of congestion further emphasizes the long memory of the academic workforce. High postdoctoral to faculty ratios are reached relatively early under most parameter combinations and persist thereafter. Once established, these conditions are difficult to reverse because faculty turnover operates on much longer time scales than degree production. This temporal asymmetry explains why short-term adjustments to degree output have limited impact on long--run workforce structure. Several limitations of the modeling framework should be acknowledged. The model operates at an aggregate national level and does not distinguish between institution types or heterogeneous career pathways. Postdoctoral positions are treated as a single category, and faculty exits are modeled using constant probabilities rather than cohort specific processes. \\
 
 \noindent Overall, the results show that degree counts alone provide an incomplete representation of academic workforce dynamics. Persistent congestion can arise even under stable or declining degree production when hiring capacity is constrained by slow faculty turnover. Explicit representation of vacancy--limited hiring is therefore essential for understanding long--run career outcomes in the mathematical sciences and in other fields characterized by extended training pipelines and limited permanent positions. Future work could incorporate independent workforce datasets to assess order of magnitude consistency of postdoctoral and faculty stocks; however, such validation is beyond the scope of the present structural analysis.

\section{Conclusion}\label{sec5}

\noindent This study developed a discrete--time stock–flow model to examine long--run academic workforce dynamics in the mathematical sciences. The framework provides a structured representation of how observed degree production, faculty turnover, and hiring capacity jointly shape postdoctoral and faculty trajectories when advancement to permanent positions is constrained by vacancies. By explicitly distinguishing between observed degree flows and latent workforce stocks, the model allows downstream dynamics to be analyzed without inferring workforce outcomes directly from degree counts alone. The obtained results show that sustained accumulation at the postdoctoral stage can arise as a structural consequence of slow faculty turnover, independent of short-term changes in degree production. Under vacancy--limited hiring, increases in candidate supply do not translate into proportional faculty growth but instead propagate backward through the pipeline, leading to elevated postdoctoral stocks and intensified competition for faculty positions. Sensitivity analyses further show that adjustments to degree production alone have limited influence on long--run workforce structure unless accompanied by changes in faculty exit rates or hiring capacity. These findings highlight the importance of explicitly modeling hiring constraints when interpreting academic workforce trends or evaluating policy interventions. \\

\noindent While these results clarify how structural constraints shape long--run academic workforce dynamics, some limitations of the present analysis should be acknowledged. First, the model is calibrated using externally reported statistics and literature--based assumptions rather than estimated by fitting to workforce time series. As a result, the analysis is intended as a data--anchored, mechanistic investigation of structural dynamics rather than as a predictive or empirically validated forecasting model. Second, upstream undergraduate and graduate populations are reconstructed from observed degree data and therefore act as exogenous inputs to the postdoctoral and faculty system. This design precludes standard out-of-sample validation and implies that degree-series consistency checks are in-sample by construction. Third, while the sensitivity analysis identifies dominant parameters governing congestion and faculty outcomes, it does not fully capture higher-order parameter interactions, which may warrant future investigation using variance-based methods. Finally, the functional forms governing competition and hiring allocation are chosen for mathematical transparency and interpretability rather than empirical calibration, and alternative formulations could be explored to assess qualitative robustness. Despite these limitations, the framework introduced here provides a flexible approach for examining how structural constraints shape academic workforce dynamics in the mathematical sciences. Although the analysis focuses on a single discipline, the modeling strategy is applicable to other academic fields characterized by extended training pathways and limited permanent positions. Therefore, future work may incorporate independent workforce data for validation, explore alternative competition and hiring mechanisms, and extend the framework to account for institutional heterogeneity and cross-sector career pathways. Nonetheless, the central conclusion remains that long--run workforce outcomes are governed primarily by structural features of faculty turnover and hiring capacity rather than by degree production alone.

\subsection*{Acknowledgments}
  \noindent This work was conceived at the American Institute of Mathematics (AIM) workshop ``MetaMath: Modeling the mathematical sciences community using mathematics, statistics, and data science,'' held December 8--12, 2025 in Pasadena, California. The authors thank AIM for its hospitality and the workshop participants for valuable discussions. AIM is supported by the National Science Foundation.

 \subsection*{Funding}
   \noindent No funding was received for conducting this study.

   \subsection*{Competing Interests}
   \noindent The authors declare no competing interests relevant to the content of this article.

 \subsection*{Data Availability}
  \noindent The degree completion data analyzed in this study are publicly available from the National Center for Education Statistics (NCES), U.S. Department of Education, at \url{https://nces.ed.gov/programs/digest/d22/tables/dt22_325.65.asp}.

  \subsection*{Code Availability}
  \noindent The analysis code is available at \url{https://github.com/Oluwatosin-Babasola/MetaMath}.

  \subsection*{Disclaimer}  
  \noindent The views expressed in this article are those of the author(s) and do not necessarily reflect the official policy or position of the United States Air Force Academy, the Air Force, the Department of Defense, or the U.S. Government.

 \subsection*{Author Contributions}
 \noindent Oluwatosin Babasola: Writing original draft, Methodology, Conceptualization, Data curation, Visualization, Formal analysis; Olayemi Adeyemi: Validation, Writing review and editing; Ron Buckmire: Conceptualization, Validation, Writing review and editing, Data curation; Daozhou Gao: Conceptualization, Data curation, Validation, Writing review and editing; Maila Hallare: Data curation, Conceptualization, Writing review and editing, Validation; Olaniyi Iyiola: Conceptualization, Writing review and editing, Data curation, Validation; Deanna Needell: Conceptualization, Data curation, Writing review and editing; Chad M. Topazh: Conceptualization, Validation, Writing review and editing, Data curation; Andrés R. Vindas-Meléndez: Conceptualization, Validation, Writing review and editing, , Data curation. All authors have read and approved the final
version of the manuscript.

\bibliography{ref}
\end{document}